\renewcommand{\le}{\leqslant}
\renewcommand{\leq}{\leqslant}
\renewcommand{\geq}{\geqslant}
\renewcommand{\setminus}{\smallsetminus}
\renewcommand{\gamma}{\upgamma}
\renewcommand{\pi}{\uppi}
\newcommand{\f}{\frac}
\newcommand{\e}{\varepsilon}
\newcommand{\R}{\mathbb R}
\newtheorem{theorem}{Theorem}
\newtheorem{lemma}[theorem]{Lemma}
\newtheorem{proposition}[theorem]{Proposition}
\newtheorem{corollary}[theorem]{Corollary}
\theoremstyle{remark}
\newtheorem{remark}[theorem]{Remark}
\newtheorem{question}[theorem]{Question}
\renewcommand{\tau}{\uptau}
\renewcommand{\xi}{\upxi}
\renewcommand{\rho}{\uprho}
\renewcommand{\subset}{\subseteq}
\newcommand{\N}{\mathbb N}
\newcommand{\eqdef}{\stackrel{\mathrm{def}}{=}}
\renewcommand{\theta}{\uptheta}
\renewcommand{\lambda}{\uplambda}
\renewcommand{\gamma}{\upgamma}
\renewcommand{\beta}{\upbeta}
\renewcommand{\alpha}{\upalpha}
\renewcommand{\kappa}{\upkappa}
\renewcommand{\psi}{\uppsi}
\renewcommand{\rho}{\uprho}
\renewcommand{\delta}{\updelta}
\renewcommand{\pi}{\uppi}
\renewcommand{\omega}{\upomega}
\renewcommand{\sigma}{\upsigma}
\renewcommand{\eta}{\upeta}
\renewcommand{\kappa}{\upkappa}
\renewcommand{\mu}{\upmu}
\renewcommand{\nu}{\upnu}
\renewcommand{\pi}{\uppi}
\renewcommand{\zeta}{\upzeta}
\newcommand{\mb}{\mathbb}
\newcommand*\diff{\mathop{}\!\mathrm{d}}
\newcommand{\ms}{\mathscr}
\newcommand{\msf}{\mathsf}
\newcommand{\mr}{\mathrm}
\begin{document}

\title{The dimensional Brunn--Minkowski inequality in Gauss space}

\author{Alexandros Eskenazis}
\address{(A.~E.) Institut de Math\'ematiques de Jussieu
\\ Sorbonne Universit\'e\\ 75252 Paris Cedex 05\\ France}
\email{alexandros.eskenazis@imj-prg.fr}

\author{Georgios Moschidis}
\address{(G.~M.) Department of Mathematics\\  University of California\\ Berkeley
\\ CA\\ 94720-3840\\ USA}
\email{gmoschidis@berkeley.edu}

\thanks{A.~E.~was supported by a postdoctoral fellowship of the Fondation Sciences Math\'ematiques de Paris. G.~M.~acknowledges support from the Miller Institute for Basic Research in Science, UC Berkeley.}

\maketitle

\vspace{-0.25in}

\begin{abstract}
Let $\gamma_n$ be the standard Gaussian measure on $\R^n$. We prove that for every symmetric convex sets $K,L$ in $\R^n$ and every $\lambda\in(0,1)$,
$$\gamma_n\big(\lambda K+(1-\lambda)L\big)^{\frac{1}{n}} \geq \lambda \gamma_n(K)^{\frac{1}{n}}+(1-\lambda)\gamma_n(L)^{\frac{1}{n}},$$
thus settling a problem raised by Gardner and Zvavitch (2010). This is the Gaussian analogue of the classical  Brunn--Minkowski inequality for the Lebesgue measure. We also show that, for a fixed $\lambda\in(0,1)$, equality is attained if and only if $K=L$.
\end{abstract}

\bigskip

{\footnotesize
\noindent {\em 2020 Mathematics Subject Classification.} Primary: 52A40; Secondary: 52A20, 28C20, 47F10.

\noindent {\em Key words.} Brunn--Minkowski inequality, symmetric convex sets, Gaussian measure, Gardner--Zvavitch problem.}


\section{Introduction}

The classical Brunn--Minkowski inequality asserts that for every compact sets $A,B$ in $\R^n$ and every $\lambda\in(0,1)$,
\begin{equation} \label{eq:brunn-minkowski}
\big|\lambda A+(1-\lambda)B\big|^{\frac{1}{n}} \geq \lambda|A|^{\frac{1}{n}} + (1-\lambda)|B|^{\frac{1}{n}},
\end{equation}
where $|\cdot|$ denotes Lebesgue measure and the Minkowski convex combination of sets is given by
\begin{equation}
\lambda A+(1-\lambda)B = \big\{\lambda a+(1-\lambda)b: \ a,b\in A\big\}.
\end{equation}
In view of the importance of the Brunn--Minkowski inequality in convex geometric analysis (see the monographs \cite{Gar06, Sch14} and the surveys \cite{Gar02, Mau05, Bar06}), the last decades have seen a surge of activity around refinements and extensions of \eqref{eq:brunn-minkowski} in different contexts. We refer to \cite{KM17, KL18, HKL20} for an up to\mbox{ date account of some important achievements in this area.}

One prominent direction in modern Brunn--Minkowski theory is the study of inequalities relating the ``size'' of the Minkowski sum of subsets of $\R^n$ to the ``sizes'' of the individual summands, where ``size'' can be interpreted more loosely than in the sense of the usual Euclidean volume. In this paper, we will be interested in the case where ``size'' refers to the standard Gaussian measure $\gamma_n$ given by $\diff\gamma_n(x) = \tfrac{\exp(-|x|^2/2)}{(2\pi)^{n/2}}\diff x$; here $|x|$ denotes the Euclidean length of a vector $x\in\R^n$. An example of a profound geometric inequality for $\gamma_n$ is Ehrhard's inequality \cite{Ehr83}, which states that for every Borel measurable sets $A,B$ in $\R^n$ and every $\lambda\in(0,1)$,
\begin{equation} \label{eq:ehrhard}
\Phi^{-1}\big(\gamma_n\big(\lambda A+(1-\lambda)B\big)\big) \geq \lambda \Phi^{-1}\big(\gamma_n(A)\big)+(1-\lambda)\Phi^{-1}\big(\gamma_n(B)\big),
\end{equation}
where $\Phi^{-1}$ is the inverse of the Gaussian distribution function $\Phi(x)=\gamma_1((-\infty,x])$. Inequality \eqref{eq:ehrhard} is known to capture many delicate probabilistic and geometric properties of the Gaussian measure. For instance, it implies the Gaussian isoperimetric inequality, which asserts that half-spaces have minimal Gaussian surface area among all sets of fixed measure. Ehrhard's original proof of \eqref{eq:ehrhard} proceeded via Gaussian symmetrization and required both sets $A$ and $B$ to be convex, an assumption which was later removed by Borell in \cite{Bor03} (see also \cite{Lat96} for a partial result).

While Ehrhard's inequality \eqref{eq:ehrhard} captures the optimal dimension-free convexity of the Gaussian measure, its validity for general Borel subsets $A,B$ of $\R^n$ shows that it is oblivious to additional geometric properties of the underlying sets, such as convexity. In \cite{GZ10}, Gardner and Zvavitch undertook a  detailed investigation of Gaussian inequalities in (dual) Brunn--Minkowski theory, which they concluded by asking (see \cite[Question~7.1]{GZ10}) whether the dimensional Brunn--Minkowski inequality \eqref{eq:brunn-minkowski} holds with the Lebesgue measure $|\cdot|$ replaced by $\gamma_n$ with the assumption that the underlying sets are convex and contain the origin. A counterexample to this statement was produced by Nayar and Tkocz in \cite{NT13}, yet the possibility of such an inequality being true for all origin symmetric convex sets remained open. This problem is settled in the affirmative here.

\begin{theorem} \label{thm:g-z}
For every $n\in\N$, every symmetric convex sets $K,L$ in $\R^n$ and every $\lambda\in(0,1)$,
\begin{equation} \label{eq:g-z}
\gamma_n\big(\lambda K+(1-\lambda)L\big)^{\frac{1}{n}} \geq \lambda \gamma_n(K)^{\frac{1}{n}}+(1-\lambda)\gamma_n(L)^{\frac{1}{n}}.
\end{equation}
\end{theorem}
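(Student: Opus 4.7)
My strategy is to reduce \eqref{eq:g-z} to a local, infinitesimal inequality on the boundary of a single smooth symmetric convex body, and then to establish that boundary inequality by using the hypothesis $K=-K$ in an essential way. This follows the general blueprint that Kolesnikov and Milman have developed for dimensional Brunn--Minkowski problems; the distinguishing new ingredient is a Gaussian Poincar\'e-type inequality for \emph{even} functions on $\partial K$.

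\emph{Step 1: Reduction to a local inequality.} By approximation, it suffices to treat smooth, strictly convex, origin-symmetric $K$ and $L$. Writing $h_t = (1-t)h_K + t h_L$ and letting $K_t$ be the convex body with support function $h_t$, inequality \eqref{eq:g-z} is equivalent to the concavity of $t \mapsto \gamma_n(K_t)^{1/n}$ on $[0,1]$. Differentiating twice in $t$ and using the arbitrariness of $h_L - h_K$, this reduces to the pointwise variational statement
\[
\frac{\mathrm{d}^2}{\mathrm{d} t^2}\bigg|_{t=0} \gamma_n\big(K_t^\varphi\big)^{1/n} \leq 0,\qquad K_t^\varphi \eqdef \big\{x:\langle x,u\rangle \leq h_K(u)+t\varphi(u)\text{ for all }u\in S^{n-1}\big\},
\]
for every smooth symmetric strictly convex $K$ and every smooth even $\varphi$ on $S^{n-1}$.

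\emph{Step 2: Poincar\'e formulation on $\partial K$.} Unfolding the second variation via the divergence theorem on $\partial K$ recasts the inequality as a trace-type weighted bound of the schematic form
\[
\int_{\partial K} \varphi^2 \, \mathrm{d}\sigma_K - \frac{1}{n\,\gamma_n(K)}\bigg(\int_{\partial K} \varphi \, \mathrm{d}\sigma_K\bigg)^{\!2} \leq \int_{\partial K}\big\langle \mathcal{A}_K^{-1}\nabla_\tau \varphi,\nabla_\tau \varphi\big\rangle\, \mathrm{d}\sigma_K,
\]
where $\sigma_K$ is the Gaussian surface measure, $\nabla_\tau$ is the tangential gradient, and $\mathcal{A}_K$ is a positive definite operator built from the second fundamental form of $\partial K$ together with the boundary Hessian of the Gaussian potential $|x|^2/2$. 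The assumption $K=-K$ forces the perturbation $\varphi$ to be even on $\partial K$, so we only need the inequality for even functions --- a gain which is indispensable, since the analogous inequality is false without symmetry, as shown by the counterexample of Nayar and Tkocz.

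\emph{Step 3: The even Poincar\'e inequality.} To establish the trace inequality above for even $\varphi$, the plan is to realise $\varphi$ as the Neumann boundary datum of an auxiliary function $u$ on $K$ solving a weighted elliptic problem against the Gaussian density, and then to apply a Bochner--Reilly-type integration-by-parts identity in $K$. This transforms the boundary integrals on the left-hand side into an interior weighted $L^2$ Hessian quantity that can be controlled by the right-hand side through the positivity of $\mathcal{A}_K$. Evenness of $\varphi$ permits one to solve the auxiliary problem within the class of even functions on $K$, which eliminates the contributions of the lowest odd spectral modes and is precisely what supplies the missing factor $\tfrac{1}{n}$ in the constant.

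\textbf{Main obstacle.} The decisive difficulty is Step~3: proving the even Poincar\'e inequality on $\partial K$ with the sharp dimensional constant, uniformly over all smooth symmetric convex bodies $K$. The operator $\mathcal{A}_K$ degenerates whenever $\partial K$ flattens, so one must carefully use the contribution of the Gaussian Hessian $\mathrm{Id}$ to compensate for small principal curvatures and to keep the inequality tight in the infinite-cylinder limit. I expect the bulk of the technical work to lie in setting up the correct auxiliary Neumann problem, verifying its solvability within the class of even functions, and extracting the sharp constant $1/(n\gamma_n(K))$ from the resulting Bochner identity.
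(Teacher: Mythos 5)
Your plan is structurally compatible with the paper's proof: the paper also reduces Theorem~\ref{thm:g-z} to a local Poincar\'e-type estimate via the Kolesnikov--Livshyts scheme (Proposition~\ref{prop:kl}), which is precisely the boundary/interior second-variation machinery you sketch in Steps~1 and~2. You also correctly identify that the symmetry hypothesis must enter at the level of the local estimate, since the inequality is false for non-symmetric convex bodies. However, the outline has a decisive gap, which you acknowledge yourself: Step~3, the local inequality with the sharp constant $\tfrac1n$ for even data, is where all the actual mathematics lives, and you do not prove it. Naming an auxiliary Neumann problem and invoking ``a Bochner--Reilly-type identity'' does not by itself produce the constant $\tfrac1n$; the Kolesnikov--Livshyts version of exactly this strategy gives only $\tfrac{1}{2n}$, and it is not at all obvious how evenness upgrades that to $\tfrac{1}{n}$ within a generic Bochner argument.

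The mechanism the paper actually uses in Step~3 is quite different in flavor from a spectral/Bochner argument and is worth knowing. Working with the interior Kolesnikov--Livshyts formulation (an even $u$ with $\ms{L}u=1$ on $K$, and the target $\int\|\nabla^2u\|_{\mr{HS}}^2+|\nabla u|^2\,\diff\gamma_K\ge\tfrac1n$), one decomposes the Hessian into its trace part and traceless part, and then subtracts the specific reference function $r(x)=\tfrac{|x|^2}{2n}$, which has identically vanishing traceless Hessian and $\Delta r\equiv 1$. This produces the pointwise identity
\begin{equation*}
\|\nabla^2u\|_{\mr{HS}}^2=\|\nabla^2(u-r)\|_{\mr{HS}}^2+\frac{2}{n}\sum_{i=1}^n x_i\partial_i u+\frac1n.
\end{equation*}
The role of symmetry is then concrete and elementary: because $u$ is even and $r$ is even, each component $\partial_i(u-r)=\partial_iu-\tfrac{x_i}{n}$ is odd, hence has mean zero with respect to $\gamma_K$. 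This is exactly what lets one apply the Brascamp--Lieb Poincar\'e inequality \eqref{eq:brascamp-lieb} (with $\beta=1$ on the symmetric convex body $K$) to each $\partial_i(u-r)$, sum over $i$, and cancel the cross term $\tfrac{2}{n}\sum x_i\partial_iu$ exactly, leaving only manifestly nonnegative remainders plus $\tfrac1n$. There is no spectral decomposition, no auxiliary Neumann problem to solve, and no degeneracy of a boundary operator to manage. Your intuition that evenness ``eliminates the lowest odd modes'' is pointing at the right phenomenon (the oddness of $\nabla(u-r)$), but without the specific choice of the subtraction $r$ and the Brascamp--Lieb step, the argument does not close.
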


By taking $K$ and $L$ to be small coordinate boxes around the origin, it becomes clear that the exponent $\tfrac{1}{n}$ is optimal (i.e.~maximal) in inequality \eqref{eq:g-z}. As was already observed in \cite{GZ10}, the dimensional Gaussian Brunn--Minkowski inequality \eqref{eq:g-z} neither trivially follows nor implies Ehrhard's inequality \eqref{eq:ehrhard} for origin symmetric convex sets.


\subsection{Symmetry in Brunn--Minkowski theory} The dimensional Brunn--Minkowski inequality \eqref{eq:g-z} is a refinement of the log-concavity of the Gaussian measure, that is, the fact that for every Borel sets $A,B$ in $\R^n$ and every $\lambda\in(0,1)$,
\begin{equation} \label{eq:log-conc}
\gamma_n\big(\lambda A+(1-\lambda)B\big) \geq\gamma_n(A)^\lambda\gamma_n(B)^{1-\lambda},
\end{equation}
for the class of symmetric convex sets. Strengthenings of measure-theoretic inequalities such as \eqref{eq:log-conc} under convexity and symmetry assumptions repeatedly appear in Brunn--Minkowski theory and the geometry of the Gaussian measure. To illustrate this phenomenon, we recall the deep B-inequality of Cordero-Erausquin, Fradelizi and Maurey \cite{CFM04}, which states that for every symmetric convex set $K$ in $\R^n$, the map $\alpha(t)=\gamma_n(e^tK)$ is a log-concave function on $\R$. In this setting, the log-concavity of the function $\beta(s)=\gamma_n(sK)$ on $\R_+$ for every convex set $K$ in $\R^n$ is a straightforward consequence of \eqref{eq:log-conc}, yet the additional symmetry assumption is necessary for the log-concavity of $\alpha$ as can be seen by taking $K=[-1,\infty)$ on $\R$. Other important examples of inequalities in Gauss space (see also \cite{Lat02}) which crucially rely on the symmetry and convexity of the underlying sets are the S-inequality of Lata\l a and \mbox{Oleszkiewicz \cite{LO99} and Royen's correlation inequality \cite{Roy14}.}

Theorem \ref{thm:g-z} belongs in a large network of (largely conjectural) inequalities involving log-concave measures of various notions of convex combinations of symmetric convex sets in $\R^n$. At the top of the hierarchy of these inequalities lies the celebrated log-Brunn--Minkowski conjecture of B\"{o}r\"{o}czky, Lutwak, Yang and Zhang \cite{BLYZ12}, which asserts that the Euclidean volume of the geometric mean of two symmetric convex sets can be bounded below by the geometric mean of their volumes. Its validity would, for instance, imply the dimensional Brunn--Minkowski inequality \eqref{eq:g-z} and the B-inequality with $\gamma_n$ replaced by any symmetric log-concave measure $\mu$. Surveying in detail all recent developments in this area lies beyond the scope of this paper, so we refer to \cite{KM17, YZ19, CHLL18, Put19, BK20, HKL20, KL20} for recent results and further bibliographical information.


\subsection{Approaches towards the Gardner--Zvavitch problem} Since the formulation of the problem in \cite{GZ10}, there have been several partial results towards the general statement of Theorem \ref{thm:g-z}. Gardner and Zvavitch themselves proved the Gaussian Brunn--Minkowski inequality \eqref{eq:g-z} in the special cases that the sets $K,L$ are either coordinate boxes containing the origin or dilates of a fixed symmetric convex set. These results were later generalized to more general measures by Marsiglietti \cite{Mar16}. In \cite{CLM17}, Colesanti, Livshyts and Marsiglietti showed that \eqref{eq:g-z} holds when both symmetric bodies $K,L$ are small perturbations of the Euclidean ball. Moreover, Livshyts, Marsiglietti, Nayar and Zvavitch \cite{LMNZ17} have used a clever variant of the Pr\'ekopa--Leindler inequality \cite[Theorem~7.1.2]{Sch14} to show that inequality
\begin{equation} \label{eq:mu-b-mink}
\forall \ \lambda\in(0,1), \ \ \ \ \mu\big(\lambda A+(1-\lambda)B\big)^{\frac{1}{n}} \geq \lambda\mu(A)^{\frac{1}{n}}+(1-\lambda)\mu(B)^{\frac{1}{n}}
\end{equation}
holds true when $\mu$ is an unconditional product measure on $\R^n$ and $A,B$ are ideals in $\R^n$. Their result  was later extended to weakly unconditional sets by Ritor\'e and Yepes Nicol\'as \cite{RY18}. The planar case of inequality \eqref{eq:mu-b-mink} can be derived for any symmetric log-concave measure $\mu$ on $\R^2$ and all symmetric convex sets $K,L$ in $\R^2$ by combining \cite[Proposition~1]{LMNZ17} with \cite[Theorem~1.7]{BLYZ12} and a result of Saroglou \cite[Theorem~3.1]{Sar16} . Finally, the Gardner--Zvavitch problem was recently settled affirmatively for a class of symmetric convex sets with many hyperplane symmetries by B\"{o}r\"{o}czky and Kalantzopoulos \cite{BK20}.

\subsubsection{The local Gardner--Zvavitch problem, after Kolesnikov and Livshyts} The proofs of all the aforementioned results crucially require additional symmetries of the underlying sets which are not available in the general setting of Theorem \ref{thm:g-z}. In \cite{KL18}, Kolesnikov and Livshyts took a different route to attack the Gardner--Zvavitch problem, by studying how inequalities of the form
\begin{equation} \label{eq:dim-bm-mu}
\forall \ \lambda\in(0,1), \ \ \ \ \mu\big(\lambda K+(1-\lambda)L\big)^{\frac{\delta}{n}} \geq \lambda\mu(K)^{\frac{\delta}{n}}+(1-\lambda)\mu(L)^{\frac{\delta}{n}}
\end{equation}
behave infinitesimaly when the convex bodies $K$ and $L$ are small perturbations of each other and then proving a local-to-global principle. This is also the approach which we shall be taking. From now on, we will refer to twice continuously differentiable functions simply as smooth functions.

To illustrate this technique, we briefly return to the B-inequality of \cite{CFM04}, asserting that for every symmetric convex set $K$ in $\R^n$, the function $\alpha(t)=\gamma_n(e^tK)$ is log-concave on $\R$. It is straightforward to observe (see \cite[p.~413]{CFM04}) that the log-concavity of $\alpha$ on $\R$ for an arbitrary $K$ is equivalent to its infinitesimal log-concavity at $t=0$, that is, $\alpha''(0)\alpha(0)\leq\alpha'(0)^2$. An explicit\mbox{ calculation now shows that the latter inequality can be equivalently rewritten as}
\begin{equation} \label{eq:cfm}
\mathrm{Var}_{\gamma_K}(|x|^2) \leq \frac{1}{2} \int \big|\nabla |x|^2\big|^2 \,\diff\gamma_K(x),
\end{equation}
where $\gamma_K$ is the rescaled restriction of $\gamma_n$ on a symmetric convex set $K$ with nonempty interior, that is, the measure given by $\gamma_K(A)=\tfrac{\gamma_n(A\cap K)}{\gamma_n(K)}$ for Borel subsets $A$ of $\R^n$. The delicate aspect of inequality \eqref{eq:cfm} lies in the constant $\tfrac{1}{2}$ on the right hand side. Indeed, the same Poincar\'e inequality with constant 1 is valid in much greater generality by a classical result of Brascamp and Lieb \cite{BL76}, which implies that if $\mu$ is a measure of the form $\diff\mu(x)=e^{-V(x)}\diff x$ with a potential whose Hessian satisfies $\nabla^2V \geq \beta \msf{Id}$, then every smooth $f:\R^n\to\R$ satisfies
\begin{equation} \label{eq:brascamp-lieb}
\mathrm{Var}_{\mu}(f) \leq \frac{1}{\beta} \int |\nabla f|^2 \,\diff\mu.
\end{equation}
Equivalently, \eqref{eq:brascamp-lieb} says that the spectral gap of such a measure is at least $\sqrt{\beta}$. In order to prove \eqref{eq:cfm}, Cordero-Erausquin, Fradelizi and Maurey succeeded to realize this inequality as special case of a second eigenvalue problem for even functions and then crucially used the symmetry of both the measure $\gamma_K$ and the function $x\mapsto |x|^2$. Such an analytic use of the underlying symmetry of the problem also lies at the heart of the proof of Theorem \ref{thm:g-z} (see also Theorem \ref{thm:local-g-z} below).

Recall that the generator of the Ornstein--Uhlenbeck semigroup is the elliptic differential operator $\ms{L}$ whose action on a smooth function $u:\R^n\to\R$ is given by
\begin{equation}
\forall \ x\in\R^n, \ \ \ \ \ms{L}u(x) = \Delta u(x) - \sum_{i=1}^n x_i \partial_i u(x).
\end{equation}
We also denote by $\|A\|_{\mathrm{HS}}$ the Hilbert--Schmidt norm of a matrix $A$, i.e.~$\|A\|_{\mathrm{HS}}^2 = \sum_{i,j} a_{ij}^2$. In \cite{KL18}, Kolesnikov and Livshyts proved the following local-to-global principle (see also \cite{Col08, CLM17, KM18} for closely related infinitesimal versions of other Brunn--Minkowski-type inequalities).

\begin{proposition} [Kolesnikov, Livshyts] \label{prop:kl}
Fix $n\in\N$. Let $\delta\in[0,1]$ be such that for every symmetric convex set $K$ in $\R^n$, every smooth symmetric function $u:K\to\R$ with $\ms{L}u=1$ on $K$ satisfies
\begin{equation} \label{eq:local-g-z0}
\int \|\nabla^2u\|_{\mathrm{HS}}^2 + |\nabla u|^2 \,\diff\gamma_K \geq \frac{\delta}{n}.
\end{equation}
Then, for every symmetric convex sets $K,L$ in $\R^n$ and every $\lambda\in(0,1)$,
\begin{equation} \label{eq:gauss-b-m-again}
\gamma_n\big(\lambda K+(1-\lambda)L\big)^{\frac{\delta}{n}} \geq \lambda \gamma_n(K)^{\frac{\delta}{n}}+(1-\lambda)\gamma_n(L)^{\frac{\delta}{n}}.
\end{equation}
\end{proposition}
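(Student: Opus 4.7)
My plan is to reduce the global inequality \eqref{eq:gauss-b-m-again} to the pointwise second-order differential inequality
\begin{equation*}
F(t)F''(t)\le \Big(1-\tfrac{\delta}{n}\Big)F'(t)^2,\qquad t\in(0,1),
\end{equation*}
for the scalar function $F(t)\eqdef\gamma_n\big((1-t)K+tL\big)$ on $[0,1]$. This is exactly the statement that $t\mapsto F(t)^{\delta/n}$ is concave on $[0,1]$, from which \eqref{eq:gauss-b-m-again} follows by evaluating the concave function at $t=1-\lambda$ and using its endpoint values $F(0)=\gamma_n(K)$ and $F(1)=\gamma_n(L)$. A routine approximation argument reduces matters to the case in which $K,L$ have $C^\infty$ strictly convex boundaries and nonempty interior, so that $K_t\eqdef(1-t)K+tL$ also does, with outer unit normal field $\nu_t$ and positive definite second fundamental form $\mathrm{II}_t$ at every point of $\partial K_t$.

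I would first write down the first two derivatives of $F$ as boundary integrals. Since $\partial K_t$ moves normally at pointwise speed $w_t(x)\eqdef (h_L-h_K)(\nu_t(x))$, with $h_K,h_L$ the support functions of $K,L$, the Gaussian-weighted Hadamard variational formula gives
\begin{equation*}
F'(t)=\int_{\partial K_t} w_t\,\varphi\,d\ms{H}^{n-1}, \qquad \varphi(x)\eqdef(2\pi)^{-n/2}e^{-|x|^2/2},
\end{equation*}
together with a more involved expression for $F''(t)$ as a quadratic boundary functional of $w_t$ that involves $\mathrm{II}_t$, the Gaussian weight $\langle x,\nu_t\rangle$, and tangential derivatives of $w_t$ along $\partial K_t$.

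To activate the hypothesis \eqref{eq:local-g-z0}, the key idea is to realize $w_t$ as the Neumann data of an interior function satisfying a Poisson-type equation for $\ms{L}$. I would solve on $K_t$ the elliptic problem $\ms{L}u_t=c_t$ with $\partial_{\nu_t}u_t=w_t$ on $\partial K_t$, where $c_t\eqdef F'(t)/F(t)$ is the unique constant forced by the Gauss--Green identity for $\gamma_n$. By origin-symmetry of $K$ and $L$, both $K_t$ and $w_t$ are symmetric, so $u_t$ can be chosen symmetric; then $\tilde u_t\eqdef u_t/c_t$ is a smooth symmetric function on $K_t$ with $\ms{L}\tilde u_t=1$, to which \eqref{eq:local-g-z0} applies. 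The bridge between the two formulas is a Reilly--Bochner identity for the Ornstein--Uhlenbeck operator on a convex domain, obtained by integrating the pointwise Bochner identity
\begin{equation*}
\tfrac12\ms{L}|\nabla u_t|^2=\|\nabla^2 u_t\|_{\mathrm{HS}}^2+\langle\nabla u_t,\nabla\ms{L}u_t\rangle+|\nabla u_t|^2
\end{equation*}
against $d\gamma_n$ over $K_t$ and integrating by parts, using $\ms{L}u_t=c_t$ (so the cross-term vanishes) together with the Neumann condition. After decomposing $\nabla u_t=\nabla_T u_t+w_t\nu_t$ on $\partial K_t$, the boundary integrals should be matched with the Hadamard expression for $F''(t)$ modulo a nonnegative term of the form $\int_{\partial K_t}\mathrm{II}_t(\nabla_T u_t,\nabla_T u_t)\,\varphi\,d\ms{H}^{n-1}\ge 0$, which is discarded by convexity of $K_t$. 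Applying the hypothesis to $\tilde u_t$ and multiplying through by $c_t^2=F'(t)^2/F(t)^2$ then yields the desired ODE inequality.

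\textbf{The main obstacle} will be the careful bookkeeping in the Reilly--Bochner computation outlined above: matching the boundary integrals generated by integration by parts with the explicit Hadamard formula for $F''(t)$, tracking signs through the expansion of $\nabla^2 u_t(\nabla u_t,\nu_t)$ into tangential and normal pieces, and isolating the second-fundamental-form contribution with the correct sign so that convexity of $K_t$ lets it be dropped. The symmetry of $K$ and $L$ enters softly, through the choice of an even Neumann solution $u_t$, but is indispensable because \eqref{eq:local-g-z0} is restricted to symmetric test functions.
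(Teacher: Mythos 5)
Your outline is correct and coincides with the approach of Kolesnikov and Livshyts in \cite{KL18}, which the paper cites for this proposition without reproducing a proof. The reduction to the ODE $FF''\le(1-\delta/n)(F')^2$, the Hadamard variation formulas, the Neumann problem $\ms{L}u_t=c_t$, $\partial_{\nu_t}u_t=w_t$ with $c_t=F'(t)/F(t)$, the Reilly--Bochner identity for $\ms{L}$, the use of evenness of $K_t$ and $w_t$ to choose an even $u_t$, and the discarding of the $\mathrm{II}_t$-positive boundary remainder are exactly the ingredients of that argument (cf.\ also Kolesnikov--Milman); the one case you did not flag, $c_t=0$, is harmless since the Reilly identity then gives $F''(t)\le -\int_{K_t}\|\nabla^2u_t\|_{\mr{HS}}^2+|\nabla u_t|^2\,\diff\gamma_n\le 0$ directly, without invoking the hypothesis.
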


The main result of \cite{KL18} was a proof of \eqref{eq:local-g-z0} with $\delta=\tfrac{1}{2}$ for all convex sets $K$ containing the origin. Then, a local-to-global principle for such convex sets (similar to Proposition \ref{prop:kl}) implies the corresponding Gaussian Brunn--Minkowski inequality \eqref{eq:gauss-b-m-again} with $\delta=\tfrac{1}{2}$. The main technical result of the present paper is the following refinement of the inequality of Kolesnikov and Livshyts for origin symmetric convex sets and symmetric (i.e.~even) solutions of the equation $\ms{L}u=1$ on $K$.

\begin{theorem} \label{thm:local-g-z}
For every $n\in\N$ and every symmetric convex set $K$ in $\R^n$, every smooth symmetric function $u:\R^n\to\R$ with $\ms{L}u=1$ on $K$, satisfies
\begin{equation} \label{eq:local-g-z}
\int \|\nabla^2u\|_{\mathrm{HS}}^2 + |\nabla u|^2 \,\diff\gamma_K \geq \frac{1}{n}.
\end{equation}
\end{theorem}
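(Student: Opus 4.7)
My approach combines the integrated Bochner identity for $\ms{L}$, a dimensional Cauchy--Schwarz bound on the Hessian, and a Brascamp--Lieb-type Poincar\'e estimate sharpened by the evenness of $u$. Since $\ms{L}u = 1$, the commutator identity $[\partial_j, \ms{L}] = -\partial_j$ gives $\ms{L}(\partial_j u) = \partial_j u$ on $K$, and Bochner's formula collapses to $\tfrac{1}{2}\ms{L}|\nabla u|^2 = \|\nabla^2 u\|_{\mathrm{HS}}^2 + |\nabla u|^2$. Integrating this over $K$ against $\gamma_n$ and using the divergence theorem, together with the identity $\int_{\partial K}\partial_\nu u\,d\gamma_{\partial K} = \int_K \ms{L}u\,d\gamma_n = \gamma_n(K)$, yields
\[
\int_K\bigl(\|\nabla^2 u\|_{\mathrm{HS}}^2 + |\nabla u|^2\bigr)\,d\gamma_K \;=\; \frac{1}{\gamma_n(K)}\int_{\partial K}\langle\nabla^2 u\,\nabla u,\nu\rangle\,d\gamma_{\partial K},
\]
reducing matters to a boundary integral.

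Applying next the pointwise trace inequality $\|\nabla^2 u\|_{\mathrm{HS}}^2 \geq (\Delta u)^2/n$ (for the symmetric matrix $\nabla^2 u$), together with $\Delta u = \ms{L}u + x\cdot\nabla u = 1 + x\cdot\nabla u$, it suffices to prove the reduced inequality
\[
\int_K\bigl[(1 + x\cdot\nabla u)^2 + n|\nabla u|^2\bigr]\,d\gamma_K \;\geq\; 1,
\]
which is saturated when $u(x)\approx|x|^2/(2n)$ on small centered balls, confirming sharpness. The evenness of $u$ now enters via two Poincar\'e-type estimates. First, each $\partial_j u$ is odd and has $\gamma_K$-mean zero by symmetry of $K$, so the Brascamp--Lieb variance inequality gives $\int(\partial_j u)^2\,d\gamma_K \leq \int|\nabla\partial_j u|^2\,d\gamma_K$; summing over $j$ yields $\int|\nabla u|^2\,d\gamma_K \leq \int\|\nabla^2 u\|_{\mathrm{HS}}^2\,d\gamma_K$, doubling the effective contribution of the gradient term. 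Second, the Cordero-Erausquin--Fradelizi--Maurey factor-$2$ improvement of the Poincar\'e inequality for even functions on symmetric convex sets, applied to the even function $x\cdot\nabla u$, should then close the reduced inequality, providing exactly the sharpening from the Kolesnikov--Livshyts constant $\delta = \tfrac12$ to the target $\delta = 1$.

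The main obstacle is handling the indefinite cross term $\int x\cdot\nabla u\,d\gamma_K$, whose sign is not a priori controlled, causing naive Jensen or Cauchy--Schwarz bounds to lose the sharp constant. I expect the resolution to require a careful integration by parts, rewriting $x\cdot\nabla u = \Delta u - 1$ and exploiting $\ms{L}u = 1$, combined with the boundary positivity $x\cdot\nu \geq 0$ on $\partial K$ (from symmetry of $K$) and the non-negativity of the second fundamental form of $\partial K$ (from convexity). The precise balance between the Brascamp--Lieb inequality on the odd gradient components and the CFM factor-$2$ sharpening on the even function $x\cdot\nabla u$ is the decisive analytic ingredient that distinguishes the symmetric-convex case from the general convex one treated by Kolesnikov and Livshyts.
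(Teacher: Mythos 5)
Your proposal contains a fatal loss of information at the very first reduction step. By applying the pointwise trace inequality $\|\nabla^2 u\|_{\mathrm{HS}}^2 \geq (\Delta u)^2/n$ and declaring that ``it suffices to prove'' $\int_K [(1+x\cdot\nabla u)^2 + n|\nabla u|^2]\,d\gamma_K \geq 1$, you have discarded the traceless part $\widehat{\nabla}^2 u$ of the Hessian, which is exactly what Kolesnikov and Livshyts do to reach $\delta=1/2$. The paper's own Section 4, Remark~1 proves that this ``reduced'' functional is \emph{not} bounded below by $1/n$ over symmetric convex sets: for the truncated slab $\ms{S}_\e = \{|x_1|<\e\}\cap\{|x'|^2<2n\}$ and $\e$ small, the corresponding functional $\ms{G}(u)$ can be driven down to $\tfrac{1}{2n}+C\e^2+o(\tfrac1n)$, which is strictly below $\tfrac1n$ for $n$ large and $\e$ small. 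So your reduced inequality is false, and no amount of Brascamp--Lieb or CFM sharpening on the pieces that remain can recover the lost constant, because the information required lives precisely in the discarded traceless Hessian.

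The actual mechanism in the paper is a cancellation that your outline misses: rather than applying Brascamp--Lieb to $\partial_j u$ (which only yields $\int|\nabla u|^2\,d\gamma_K \leq \int\|\nabla^2 u\|_{\mathrm{HS}}^2\,d\gamma_K$, with no cross term), one applies it to $\partial_j(u-r)$ with $r(x)=\tfrac{|x|^2}{2n}$, the function whose Hessian is pure trace. Since $\widehat{\nabla}^2(u-r)=\widehat{\nabla}^2 u$, this keeps the traceless Hessian in play, and the resulting variance bound expands as
\[
\int\|\nabla^2(u-r)\|_{\mathrm{HS}}^2\,d\gamma_K \;\geq\; \int\Bigl|\nabla u - \tfrac{x}{n}\Bigr|^2\,d\gamma_K
\;=\; \int|\nabla u|^2 - \tfrac{2}{n}\,x\cdot\nabla u + \tfrac{|x|^2}{n^2}\,d\gamma_K,
\]
whose cross term $-\tfrac{2}{n}\int x\cdot\nabla u\,d\gamma_K$ exactly cancels the $+\tfrac{2}{n}\int x\cdot\nabla u\,d\gamma_K$ coming from the identity $\|\nabla^2 u\|_{\mathrm{HS}}^2 = \|\nabla^2(u-r)\|_{\mathrm{HS}}^2 + \tfrac{2}{n}\Delta u - \tfrac1n$ together with $\Delta u = 1 + x\cdot\nabla u$. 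The indefinite cross term you flagged as ``the main obstacle'' never has to be estimated; it is annihilated by choosing the right comparison function $r$. Your Bochner/boundary reduction in the first paragraph is correct but unused, your trace inequality step is irreversibly lossy, and the heuristic appeals to CFM factor-$2$ improvements for $x\cdot\nabla u$ do not constitute a closing argument.
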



\subsection{Beyond power-type concavity} While the exponent $\tfrac{1}{n}$ in inequality \eqref{eq:g-z} cannot be improved, Ehrhard's inequality \eqref{eq:ehrhard} suggests that it is worth investigating potential strengthenings of \eqref{eq:g-z}, where the power function $t\mapsto t^{\frac{1}{n}}$ is replaced by a more general function of the Gaussian measure of the sets. Notice that if $\zeta_n:[0,1]\to\R$ is such that for every Borel sets $A,B$ in $\R^n$, the  inequality
\begin{equation} \label{eq:zeta}
\zeta_n\big(\gamma_n\big(\lambda A+(1-\lambda)B\big)\big) \geq \lambda \zeta_n\big(\gamma_n(A)\big)+(1-\lambda)\zeta_n\big(\gamma_n(B)\big),
\end{equation}
holds true, then choosing $A$ and $B$ to be half-spaces of the form $A=\{x\in\R^n: \ x_1\leq a\}$ and $B=\{x\in\R^n: \ x_1\leq b\}$, we see that $\zeta_n\circ\Phi$ is concave; here $\Phi$ is the Gaussian distribution function $\Phi(x)=\gamma_1((-\infty,x])$. In this sense, the choice $\zeta_n=\Phi^{-1}$ in \eqref{eq:zeta}, encapsulated by Ehrhard's inequality \eqref{eq:ehrhard}, captures the optimal convexity of $\gamma_n$ over all Borel sets in $\R^n$. Bearing this in mind as a motivating example, we ask the following (purposefully vague) question.

\begin{question} \label{que}
Fix $n\in\N$. Is there an ``optimal'' increasing function $\xi_n:[0,1]\to\R$ such that for every origin symmetric convex sets $K,L$ in $\R^n$ and every $\lambda\in(0,1)$, the inequality
\begin{equation} \label{eq:quest}
\xi_n\big(\gamma_n(\lambda K+(1-\lambda)L\big)\big) \geq \lambda\xi_n(\gamma_n(K))+(1-\lambda)\xi_n(\gamma_n(L))
\end{equation}
is satisfied?
\end{question}

Ideally, such an inequality should be a joint refinement of Ehrhard's inequality \eqref{eq:ehrhard} and of the dimensional Brunn--Minkowski inequality \eqref{eq:g-z} which becomes an equality for some nontrivial pairs of symmetric convex sets $K, L$ in $\R^n$. In \cite[p.~5350]{GZ10}, Gardner and Zvavitch presented an argument of Barthe which implies that \eqref{eq:quest} is not satisfied for $\xi_n = \Psi_n^{-1}$, where $\Psi_n(r) = \gamma_n(rB_2^n)$ and $rB_2^n=\{x\in\R^n: \ |x|\leq r\}$ is the  Euclidean ball of radius $r$. We refer to Section \ref{sec:3} for some additional observations of this kind. While we have no conjecture as to what the optimal symmetric improvement \eqref{eq:quest} of Ehrhard's inequality \eqref{eq:ehrhard} might be, we obtain the\mbox{ following strengthening of \eqref{eq:g-z}.}

\begin{theorem} \label{thm:psi}
Fix $n\in\N$ and let $\sigma_n:[0,1]\to\R$ be a strictly increasing function satisfying
\begin{equation} \label{eq:def-psi}
\forall \ r\in(0,\infty), \ \ \ \ 1+\frac{\sigma_n''(\Psi_n(r)) \Psi_n(r)}{\sigma_n'(\Psi_n(r))} = \frac{2}{n} - \frac{c_n}{n^2\Psi_n(r)} r^n e^{-r^2/2},
\end{equation}
where $c_n^{-1} = 2^{\frac{n}{2}-1}\Gamma(n/2)$. Then, for every symmetric convex sets $K,L$ in $\R^n$ and every $\lambda\in(0,1)$,
\begin{equation} \label{eq:sigma}
\sigma_n\big(\gamma_n\big(\lambda K+(1-\lambda)L\big)\big) \geq \lambda \sigma_n(\gamma_n(K))+(1-\lambda)\sigma_n(\gamma_n(L)).
\end{equation}
\end{theorem}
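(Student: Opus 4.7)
The plan is to deduce Theorem \ref{thm:psi} from a refinement of the local inequality of Theorem \ref{thm:local-g-z} combined with a generalization of the Kolesnikov--Livshyts local-to-global principle (Proposition \ref{prop:kl}) that accommodates concavity profiles $\sigma_n$ beyond pure power functions.

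First, I would adapt the argument of Proposition \ref{prop:kl} to establish the following meta-statement: suppose $F:(0,1)\to\R$ is continuous and
\[
\int \|\nabla^2 u\|_{\mathrm{HS}}^2+|\nabla u|^2\,\diff\gamma_K \geq F\big(\gamma_n(K)\big)
\]
for every symmetric convex $K\subset\R^n$ and every smooth symmetric $u:\R^n\to\R$ with $\ms{L}u=1$ on $K$. Then any strictly increasing $\sigma_n:[0,1]\to\R$ satisfying
\[
1+\frac{\sigma_n''(y)\,y}{\sigma_n'(y)}\leq F(y)\qquad\text{for all }y\in(0,1)
\]
obeys $\sigma_n(\gamma_n(\lambda K+(1-\lambda)L))\geq \lambda\sigma_n(\gamma_n(K))+(1-\lambda)\sigma_n(\gamma_n(L))$ for every pair of symmetric convex $K,L\subset\R^n$ and every $\lambda\in(0,1)$. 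The proof follows the scheme of Proposition \ref{prop:kl}: along the Minkowski path $K_t=(1-t)K_0+tK_1$, expanding $\frac{\diff^2}{\diff t^2}\sigma_n(\gamma_n(K_t))\big|_{t=0}$ in terms of the function $u$ with $\ms{L}u=1$ on $K_0$ produces a sign condition which, in the power case $\sigma_n(y)=y^{\delta/n}$, reduces to $\delta/n\leq F(y)$ and, in general, reads $1+y\sigma_n''(y)/\sigma_n'(y)\leq F(y)$.

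Second, I would sharpen Theorem \ref{thm:local-g-z} to the $\gamma_n(K)$-dependent lower bound
\[
\int \|\nabla^2 u\|_{\mathrm{HS}}^2+|\nabla u|^2\,\diff\gamma_K \geq \frac{2}{n}-\frac{c_n r^n e^{-r^2/2}}{n^2\Psi_n(r)},\qquad r:=\Psi_n^{-1}(\gamma_n(K)),
\]
for every symmetric convex $K$ and symmetric $u$ with $\ms{L}u=1$ on $K$. This bound is saturated by centered balls: for $K=rB_2^n$ the radial solution $v(s)$ of $v''(s)+(\tfrac{n-1}{s}-s)v'(s)=1$ is $v'(s)=s^{1-n}e^{s^2/2}\int_0^s t^{n-1}e^{-t^2/2}\,\diff t$, and integration in polar coordinates of $\|\nabla^2 u\|_{\mathrm{HS}}^2=v''(s)^2+(n-1)v'(s)^2/s^2$ and $|\nabla u|^2=v'(s)^2$ reduces the left-hand side to an explicit expression in $\Psi_n(r)$ and $\Psi_n'(r)=c_n r^{n-1}e^{-r^2/2}$ matching the right-hand side exactly. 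For a general symmetric convex body I would revisit the proof of Theorem \ref{thm:local-g-z}, augmenting it with an $r$-dependent excess coming from the deviation of $\|\nabla^2 u\|_{\mathrm{HS}}^2$ from $\tfrac{1}{n}(\Delta u)^2=\tfrac{1}{n}(1+\langle x,\nabla u\rangle)^2$ (nonnegative by Cauchy--Schwarz, and vanishing precisely when $\nabla^2 u$ is a scalar matrix); processed through the spectral/symmetry mechanism underlying Theorem \ref{thm:local-g-z}, this excess should produce the desired correction.

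Combining these two ingredients proves Theorem \ref{thm:psi}: applying the meta-statement of Step 1 to $F(y):=\frac{2}{n}-\frac{c_n r(y)^n e^{-r(y)^2/2}}{n^2 y}$ with $r(y):=\Psi_n^{-1}(y)$, and noting that the defining identity \eqref{eq:def-psi} is exactly $1+y\sigma_n''(y)/\sigma_n'(y)=F(y)$, gives the conclusion. The main obstacle is Step 2: Theorem \ref{thm:local-g-z} alone delivers only the $r\to 0$ limit $\tfrac{1}{n}$ of $F(\Psi_n(r))$, and extracting the sharper $r$-dependent bound requires genuinely new analytical input — most plausibly a refinement of the second-eigenvalue symmetrization argument used in \cite{CFM04} and in the proof of Theorem \ref{thm:local-g-z}, tuned to quantify how much $K$ differs from a centered Euclidean ball of the same Gaussian measure.
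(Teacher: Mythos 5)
Your Step 1 (generalize the Kolesnikov--Livshyts local-to-global principle so that the power $t\mapsto t^{\delta/n}$ is replaced by a general increasing $\sigma_n$, with the Poincar\'e side becoming $1+y\sigma_n''(y)/\sigma_n'(y)\le F(y)$) is precisely what the paper does (Proposition \ref{prop:psi-local-to-global}), and it is correct.

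The gap is in Step 2. You correctly identify the target bound, but you do not have a proof of it, and the heuristic you offer points in the wrong direction. First, your claim that the bound $\tfrac{2}{n}-\tfrac{c_n r^n e^{-r^2/2}}{n^2\Psi_n(r)}$ is saturated by centered balls is false: for $K=\rho B_2^n$ the actual infimum of $\int\|\nabla^2 u\|_{\mathrm{HS}}^2+|\nabla u|^2\,\diff\gamma_{\rho B_2^n}$ over $\ms{L}u=1$ equals $1-\bigl(\tfrac{n-1}{\rho}-\rho\bigr)\tfrac{\int_0^\rho r^{n-1}e^{-r^2/2}\diff r}{\rho^{n-1}e^{-\rho^2/2}}$ (this is computed in Remark 2 of Section \ref{sec:3}), which is strictly larger than the right-hand side of \eqref{eq:def-psi}. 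For instance with $n=1$, $\rho=1$ the ball infimum is $\approx 2.41$ while the paper's bound is $\approx 1.29$. So the target inequality is not an equality case on balls, and trying to refine the CFM-type second-eigenvalue symmetrization argument to ``measure the distance of $K$ from a ball'' is not the mechanism in play.

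Second, no refinement of Theorem \ref{thm:local-g-z} is needed at all: its proof already contains the sharper bound as an intermediate step. Inequality \eqref{eq:last-th-3} reads
\begin{equation*}
\int \|\nabla^2u\|_{\mr{HS}}^2 + |\nabla u|^2 \,\diff\gamma_K \geq \int 2|\nabla u|^2 + \frac{|x|^2}{n^2} + \frac{1}{n} \,\diff\gamma_K \geq \frac{1}{n^2}\int |x|^2\,\diff\gamma_K + \frac{1}{n},
\end{equation*}
where one simply drops the nonnegative term $2|\nabla u|^2$. From here the only remaining ingredient is elementary and purely geometric: among star-shaped sets of fixed Gaussian measure, the centered ball minimizes $\int_K |x|^2\,\diff\gamma_n$ (the paper's Lemma \ref{lem:balls}, a two-line layer-cake argument in polar coordinates). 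Writing $\gamma_n(K)=\gamma_n(\rho B_2^n)$ and using an integration by parts to evaluate $\int_{\rho B_2^n}|x|^2\,\diff\gamma_n = n\gamma_n(K)-c_n\rho^n e^{-\rho^2/2}$ then gives exactly the function $F$ in \eqref{eq:def-psi}. So the missing piece of your argument is not a new spectral estimate but this simple second-moment comparison with balls combined with the already-available intermediate inequality from the proof of Theorem \ref{thm:local-g-z}.
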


An explicit calculation (see Remark \ref{rem:sigma}) reveals that the function $y\mapsto\sigma_n(y^n)$ is convex, hence \eqref{eq:sigma} is indeed a genuine improvement of the dimensional Brunn--Minkowski inequality \eqref{eq:g-z}. Moreover, the \emph{strict} convexity of $y\mapsto\sigma_n(y^n)$ readily implies the following corollary, settling the equality cases of the dimensional Brunn--Minkowski inequality \eqref{eq:g-z} for the Gaussian measure.

\begin{corollary} \label{cor}
Fix $n\in\N$ and $\lambda\in(0,1)$. If $K,L$ are two symmetric convex sets in $\R^n$ satisfying
\begin{equation}
\gamma_n\big(\lambda K+(1-\lambda)L\big)^{\frac{1}{n}} = \lambda \gamma_n(K)^{\frac{1}{n}}+(1-\lambda)\gamma_n(L)^{\frac{1}{n}},
\end{equation}
then $K=L$.
\end{corollary}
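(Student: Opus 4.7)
The plan is a two-step argument that first uses strict convexity of $f(y) := \sigma_n(y^n)$ on $(0, 1)$ —recorded in Remark~\ref{rem:sigma} and obtained from the ODE~\eqref{eq:def-psi} via an integration by parts that yields $c_n r^n e^{-r^2/2} < n \Psi_n(r)$— to reduce to the case of equal Gaussian measures, and then upgrades this to $K = L$ by a rigidity argument. Throughout I write $V(A) := \gamma_n(A)^{1/n}$ and $M_\mu := \mu K + (1-\mu) L$, and assume the equality hypothesis $V(M_\lambda) = \lambda V(K) + (1-\lambda) V(L)$.

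\textbf{Step 1 (equal measures).} Combining Theorem~\ref{thm:psi}, which gives $f(V(M_\lambda)) \geq \lambda f(V(K)) + (1-\lambda) f(V(L))$, with the equality hypothesis to rewrite the left-hand side, I arrive at
\[
f\big(\lambda V(K) + (1-\lambda) V(L)\big) \;\geq\; \lambda f(V(K)) + (1-\lambda) f(V(L)).
\]
Strict convexity of $f$ immediately forces $V(K) = V(L) =: V$, and then the hypothesis collapses to $V(M_\lambda) = V$, yielding $\gamma_n(K) = \gamma_n(L) = \gamma_n(M_\lambda) = V^n$.

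\textbf{Step 2 (rigidity).} Using the Minkowski identity $tM_{\mu_1} + (1-t)M_{\mu_2} = M_{t\mu_1 + (1-t)\mu_2}$ together with Theorem~\ref{thm:g-z} applied to the pair $(M_{\mu_1}, M_{\mu_2})$, I will show that $\phi(\mu) := V(M_\mu)$ is concave on $[0, 1]$. Since $\phi(0) = \phi(\lambda) = \phi(1) = V$ with $\lambda \in (0, 1)$, concavity forces $\phi \equiv V$ throughout $[0, 1]$, so $\gamma_n(M_\mu) = V^n$ for every $\mu \in [0, 1]$. Ehrhard's inequality~\eqref{eq:ehrhard} applied to $(K, L)$ then becomes an equality for every $\mu$, and the equality case of Ehrhard for symmetric convex sets of equal positive Gaussian measure yields $K = L$.

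\textbf{Main obstacle.} Step~1 is the direct ``readily implies'' consequence of strict convexity referenced in the text and amounts to a one-line calculation. The substantive difficulty lies in Step~2: strict convexity of $f$ reduces the problem to $\gamma_n(K) = \gamma_n(L)$ but does not by itself separate distinct symmetric convex sets of equal Gaussian measure. My plan relies on the known equality case of Ehrhard's inequality for convex sets; an alternative more faithful to the paper's infinitesimal viewpoint is to analyze the equality cases of the local inequality~\eqref{eq:local-g-z} underlying Theorem~\ref{thm:psi}, where saturation should force the interpolating family of convex bodies between $K$ and $L$ to be trivial.
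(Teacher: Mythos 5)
Your Step 1 is exactly the paper's argument: strict concavity of $\tau_n$ (equivalently strict convexity of $y\mapsto\sigma_n(y^n)$) forces $\gamma_n(K)=\gamma_n(L)$, and feeding this back into the equality hypothesis gives $\gamma_n(K)=\gamma_n(L)=\gamma_n\big(\lambda K+(1-\lambda)L\big)$. Where you diverge is in Step 2, and there the proposal is correct but circuitous. Once Step~1 yields all three measures equal, you are already sitting at the equality case of the log-concavity inequality \eqref{eq:log-conc}, and the paper concludes immediately by citing the known rigidity of that equality case for convex sets (Dubuc \cite{Dub77}, Ehrhard \cite{Ehr86}). You instead insert an extra concavity argument to promote the single identity $\gamma_n(M_\lambda)=V^n$ to $\gamma_n(M_\mu)=V^n$ for every $\mu\in[0,1]$, and then invoke the equality case of Ehrhard's inequality. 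The concavity step is valid (the Minkowski identity $tM_{\mu_1}+(1-t)M_{\mu_2}=M_{t\mu_1+(1-t)\mu_2}$ for convex $K,L$ together with Theorem~\ref{thm:g-z} does show $\phi$ is concave, and the three-point argument does force $\phi\equiv V$), but it buys you nothing you did not already have, and the appeal to the equality case of Ehrhard is a heavier hammer: that characterization (Shenfeld--van Handel) is considerably deeper and more recent than the Pr\'ekopa/log-concavity equality case the paper uses. Also, your ``Main obstacle'' remark understates what Step~1 already delivers --- it is not merely $\gamma_n(K)=\gamma_n(L)$, but the full triple equality, which is precisely the input needed to close the argument by citation.
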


The proof of Theorem \ref{thm:local-g-z} will be presented in Section \ref{sec:2} and the proofs of Theorem \ref{thm:psi} and Corollary \ref{cor} in Section \ref{sec:2.5}. Some additional remarks are postponed to Section \ref{sec:3}.

\subsection*{Acknowledgements} We are grateful to Ramon van Handel for helpful discussions.


\section{Proof of Theorem \ref{thm:local-g-z}} \label{sec:2}

It follows readily from Proposition \ref{prop:kl} that Theorem \ref{thm:g-z} is a formal consequence of Theorem \ref{thm:local-g-z}, so in this section we shall only establish the latter. For an $n\times n$ matrix $A$, we will denote by $\widehat{A}$ the traceless part of $A$, that is $\widehat{A} = A-\tfrac{\mathrm{tr}(A)}{n}\msf{Id}$, where $\msf{Id}$ is the identity matrix. In particular, for a smooth function $u:K\to\R$, where $K$ is a symmetric convex set, we write
\begin{equation}
\widehat{\nabla}^2u \eqdef \nabla^2u - \frac{\Delta u}{n}\msf{Id}
\end{equation}
for the traceless part of its Hessian. Then, orthogonality implies the pointwise identity
\begin{equation} \label{eq:trace-decomp}
\|\nabla^2u\|_{\mr{HS}}^2 = \|\widehat{\nabla}^2u\|_{\mr{HS}}^2 + \frac{(\Delta u)^2}{n}.
\end{equation}
Consider $r:\R^n\to\R$ to be $r(x) =\tfrac{|x|^2}{2n}$, which satisfies $\widehat{\nabla}^2r \equiv 0$ and $\Delta r\equiv1$. Then, we have
\begin{equation} \label{eq:trace-decomp2}
 \|\widehat{\nabla}^2u\|_{\mr{HS}}^2 =  \big\|\widehat{\nabla}^2\big(u-r\big)\big\|_{\mr{HS}}^2 \stackrel{\eqref{eq:trace-decomp}}{=} \big\|\nabla^2\big(u-r\big)\big\|_{\mr{HS}}^2  - \frac{\big(\Delta(u-r)\big)^2}{n} =  \big\|\nabla^2\big(u-r\big)\big\|_{\mr{HS}}^2  - \frac{(\Delta u-1)^2}{n},
\end{equation}
so that combining \eqref{eq:trace-decomp} and \eqref{eq:trace-decomp2}, we get
\begin{equation}
\begin{split}
\|\nabla^2u\|_{\mathrm{HS}}^2  = \big\| \nabla^2\big(u-r\big)\big\|_{\mathrm{HS}}^2 +\frac{2}{n} \Delta u - \frac{1}{n}.
\end{split}
\end{equation}
Taking into account that $\ms{L}u(x) = \Delta u(x) -\sum_{i=1}^n x_i\partial_i u(x)=1$, we can then write
\begin{equation} \label{break-hessian}
\forall \ x\in K, \ \ \ \ \|\nabla^2u(x)\|_{\mathrm{HS}}^2 =  \big\| \nabla^2\big(u-r\big)(x)\big\|_{\mathrm{HS}}^2 +\frac{2}{n} \sum_{i=1}^n x_i\partial_i u(x) + \frac{1}{n}.
\end{equation}
For a fixed $i\in\{1,\ldots,n\}$ the partial derivative $\partial_i (u-r)$ is an odd function on $K$ and thus has expectation 0 with respect to $\gamma_K$. Therefore, the Brascamp--Lieb inequality \eqref{eq:brascamp-lieb} gives
\begin{equation} \label{eq:use-bl000}
\sum_{j=1}^n \int \big(\partial_j\partial_i (u-r)\big)^2\,\diff\gamma_K \geq \mathrm{Var}_{\gamma_K}\big(\partial_i(u-r)\big)  = \int \Big(\partial_iu(x)-\frac{x_i}{n}\Big)^2\,\diff\gamma_K(x).
\end{equation}
Summing \eqref{eq:use-bl000} over $i\in\{1,\ldots,n\}$, we get
\begin{equation} \label{just-used-bl}
\begin{split}
\int \|\nabla^2(u-r)\|_{\mr{HS}}^2\,\diff\gamma_K \geq \sum_{i=1}^n\int \Big(&\partial_iu(x)-\frac{x_i}{n}\Big)^2  \,\diff\gamma_K(x) 
\\ & = \int |\nabla u(x)|^2 - \frac{2}{n}\sum_{i=1}^n x_i \partial_i u(x) + \frac{|x|^2}{n^2} \,\diff\gamma_K(x)
\end{split}
\end{equation}
Combining \eqref{break-hessian} and \eqref{just-used-bl}, we finally deduce that
\begin{equation} \label{eq:last-th-3}
\int \|\nabla^2u\|_{\mr{HS}}^2 + |\nabla u|^2 \,\diff\gamma_K \geq \int 2|\nabla u(x)|^2  + \frac{|x|^2}{n^2} + \frac{1}{n} \,\diff\gamma_K(x) \geq \frac{1}{n}
\end{equation}
and the proof is complete. \hfill$\Box$

\begin{remark}
Notice that the only property of $K$ and $u$ that was used in the proof of Theorem \ref{thm:local-g-z} is the fact that $\int \nabla (u-r) \diff\gamma_K=0$, which allows us to use the Brascamp--Lieb inequality.
\end{remark}


\section{Proof of Theorem \ref{thm:psi}} \label{sec:2.5}

The proof of Theorem \ref{thm:psi}, in analogy with that of Theorem \ref{thm:g-z}, will proceed in two steps: first we will establish a suitable Poincar\'e-type inequality for solutions of the equation $\ms{L}u=1$ and then the geometric inequality \eqref{eq:sigma} will be a consequence of the following local-to-global principle.

\begin{proposition} \label{prop:psi-local-to-global}
Fix $n\in\N$. Let $\psi:[0,1]\to\R$ be an increasing function such that for every symmetric convex set $K$, every smooth symmetric function $u:K\to\R$ with $\ms{L}u=1$ on $K$ satisfies
\begin{equation}
\int \|\nabla^2u\|_{\mr{HS}}^2 + |\nabla u|^2 \,\diff\gamma_K \geq1+\frac{\psi''(\gamma_n(K))\gamma_n(K)}{\psi'(\gamma_n(K))}.
\end{equation}
Then, for every symmetric convex sets $K,L$ in $\R^n$ and every $\lambda\in(0,1)$,
\begin{equation}
\psi\big(\gamma_n(\lambda K+(1-\lambda)L\big)\big) \geq \lambda\psi(\gamma_n(K))+(1-\lambda)\psi(\gamma_n(L)).
\end{equation}
\end{proposition}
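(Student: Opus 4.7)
The plan is to follow the local-to-global scheme behind Proposition~\ref{prop:kl}, now adapted to the non-power function $\psi$. First, I would reformulate \eqref{eq:sigma} as the concavity on $[0,1]$ of $t\mapsto\psi(\gamma_n(K_t))$, where $K_t:=(1-t)K+tL$, and, by an approximation argument, reduce to the case where $K$ and $L$ are smooth strictly convex symmetric bodies. Concavity at a generic $t_0\in(0,1)$ follows by applying the same argument with $K_{t_0}$ in place of $K$, so it suffices to verify non-positivity of the second derivative at $t=0$. Setting $F(t):=\gamma_n(K_t)$, this amounts to
\[
\psi'(F(0))F''(0)+\psi''(F(0))F'(0)^2\leq 0.
\]

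Next, I would compute the first two variations of $F$. Using the support-function description $h_{K_t}=(1-t)h_K+t h_L$, transferred to $\partial K$ via the Gauss map $\nu_K$, the outward boundary velocity at $t=0$ is $\phi(x):=h_L(\nu_K(x))-\langle x,\nu_K(x)\rangle$, an even function on $\partial K$ because both $K$ and $L$ are symmetric. A coarea computation gives $F'(0)=\int_{\partial K}\phi\,d\gamma_n^{\partial K}$, where $d\gamma_n^{\partial K}$ denotes the Gaussian surface measure, while $F''(0)$ decomposes into boundary integrals of $\phi^2$ weighted by the mean curvature and the second fundamental form of $\partial K$, together with a contribution from the drift term $\langle x,\nu\rangle$ coming from the Gaussian density. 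To encode the perturbation $\phi$ as interior data, I would then introduce the auxiliary function $u:K\to\R$ as the solution of the Neumann problem
\[
\ms{L}u=c \ \text{ in } K, \qquad \partial_\nu u=\phi \ \text{ on } \partial K,
\]
where the constant $c$ is fixed by the Gauss--Green compatibility $c\,\gamma_n(K)=F'(0)$ and the symmetry of $\phi$ and $K$ forces $u$ to be even. Rescaling $\phi$ so that $c=1$---which affects neither the concavity of $\psi\circ F$ nor the evenness of $u$---places $u$ precisely within the class of functions considered in the hypothesis of the proposition.

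The main analytic step is a Reilly--Bochner identity for the Ornstein--Uhlenbeck operator $\ms{L}$, which rewrites the boundary integrals appearing in $F''(0)$ in terms of the interior quantity $\int_K\bigl[\|\nabla^2u\|_{\mr{HS}}^2+|\nabla u|^2\bigr]\,d\gamma_K$; convexity of $K$ ensures that the residual boundary terms (involving the second fundamental form contracted with the tangential gradient of $u$) have a favourable sign and can be discarded. Combined with the normalization $F'(0)=\gamma_n(K)$, this yields
\[
F''(0)\leq F(0)-F(0)\int_K\bigl[\|\nabla^2u\|_{\mr{HS}}^2+|\nabla u|^2\bigr]\,d\gamma_K,
\]
and feeding in the hypothesis of the proposition gives the desired second-order bound on $F''(0)$. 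The principal obstacle is precisely this Gaussian Reilly--Bochner identity and the careful bookkeeping of its boundary contributions---in particular the interplay between the curvature terms and the drift $\langle x,\nabla u\rangle$ in $\ms{L}$---which essentially replicates the computation of \cite{KL18}. All remaining steps, including the passage from the pointwise concavity at $t=0$ to the global inequality \eqref{eq:sigma}, are routine.
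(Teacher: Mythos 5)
Your proposal reconstructs precisely the Kolesnikov--Livshyts local-to-global scheme (Neumann problem $\ms{L}u=c$, $\partial_\nu u=\phi$, Gaussian Reilly--Bochner identity, convexity to discard the boundary curvature term, then homogeneity in $\phi$ to reduce to $c=1$), which is exactly what the paper invokes when it states that the proof of Proposition~\ref{prop:psi-local-to-global} is obtained by repeating \cite{KL18} mutatis mutandis with $t\mapsto t^{\delta/n}$ replaced by $\psi$. The only point worth flagging is the degenerate perturbation $F'(0)=0$: there $c=0$ and no rescaling can set $c=1$, but then the Reilly inequality alone already gives $F''(0)\leq -\int_K(\|\nabla^2u\|_{\mr{HS}}^2+|\nabla u|^2)\,\diff\gamma_n\leq 0$, which is all that concavity requires in that case.
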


In order to prove Proposition \ref{prop:psi-local-to-global} one has to repeat the arguments of the proof of Proposition \ref{prop:kl} in \cite{KL18} (see also \cite{KM18}) mutatis mutandis, by replacing the power function $t\mapsto t^{\frac{\delta}{n}}$ by a general increasing function $\psi$. As the proof of the more general Proposition \ref{prop:psi-local-to-global} does not require any additional ideas, we leave the (trivial) necessary modifications to the interested reader.

\begin{lemma} \label{lem:balls}
Fix $n\in\N$ and let $K$ be a star-shaped set in $\R^n$. If $\rho\in[0,\infty]$ is such that $\gamma_n(K)=\gamma_n(\rho B_2^n)$, where $\rho B_2^n = \{x\in\R^n: \ |x|\leq \rho\}$ is the closed Euclidean ball of radius $\rho$, then
\begin{equation} \label{eq:lemballs}
\int_K |x|^2 \,\diff\gamma_n(x) \geq \int_{\rho B_2^n} |x|^2\,\diff\gamma_n(x).
\end{equation}
\end{lemma}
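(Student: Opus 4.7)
The proof is a one-line layer-cake (rearrangement) argument; the star-shapedness of $K$ is not actually needed beyond Borel measurability, since the conclusion depends only on the radial monotonicity of $x\mapsto |x|^2$ and on the equality of Gaussian measures.

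The plan is to split both integrals across the common set $K\cap \rho B_2^n$. Writing
\begin{equation*}
\int_K |x|^2\diff\gamma_n(x) - \int_{\rho B_2^n} |x|^2\diff\gamma_n(x) = \int_{K\setminus \rho B_2^n} |x|^2\diff\gamma_n(x) - \int_{\rho B_2^n \setminus K} |x|^2\diff\gamma_n(x),
\end{equation*}
I would then use the two pointwise bounds that come for free from the definitions: on $K\setminus \rho B_2^n$ one has $|x|^2 \geq \rho^2$, while on $\rho B_2^n \setminus K$ one has $|x|^2 \leq \rho^2$.

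Combining these two bounds, the right-hand side is at least
\begin{equation*}
\rho^2\,\gamma_n(K\setminus \rho B_2^n) - \rho^2\,\gamma_n(\rho B_2^n \setminus K),
\end{equation*}
and the hypothesis $\gamma_n(K) = \gamma_n(\rho B_2^n)$ gives $\gamma_n(K\setminus \rho B_2^n) = \gamma_n(\rho B_2^n \setminus K)$, so this difference vanishes and \eqref{eq:lemballs} follows. The boundary cases $\rho = 0$ (trivial) and $\rho = \infty$ (so $\gamma_n(K) = 1$, and the right-hand side equals $n$, matching $\int_{\R^n}|x|^2\diff\gamma_n$) are handled by the same argument with the obvious conventions.

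There is no substantive obstacle here; the content of the lemma is purely the observation that, among sets of prescribed Gaussian measure, the centered Euclidean ball minimizes $\int |x|^2\diff\gamma_n$ because $|x|^2$ is a radially increasing function and the Gaussian density is radially decreasing. The point of stating it as a separate lemma is presumably that it will feed into the forthcoming identification of the extremizer of the local Poincaré-type inequality \eqref{eq:local-g-z}, where a lower bound on $\int_K |x|^2 \diff\gamma_n$ (cf.\ the term $\frac{|x|^2}{n^2}$ appearing in \eqref{eq:last-th-3}) is precisely what is required to derive the strict improvement encoded by $\sigma_n$ in Theorem \ref{thm:psi}.
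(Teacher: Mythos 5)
Your proof is correct, and it is the same underlying argument as the paper's, just expressed more cleanly. The paper writes everything in polar coordinates: it partitions the sphere into $A=\{\theta: \rho\leq\rho_K(\theta)\}$ and $B=\mb{S}^{n-1}\setminus A$, rewrites both $\gamma_n(K)-\gamma_n(\rho B_2^n)=0$ and the second-moment difference as radial integrals over these angular pieces, and then inserts the factor $r^2\gtrless\rho^2$ inside the radial integrals to conclude. Your version skips the polar parametrization entirely and works directly with the set decomposition $K\setminus\rho B_2^n$ versus $\rho B_2^n\setminus K$, using the pointwise bounds $|x|^2\geq\rho^2$ and $|x|^2\leq\rho^2$ on these pieces together with the measure-balancing identity $\gamma_n(K\setminus\rho B_2^n)=\gamma_n(\rho B_2^n\setminus K)$. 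The mechanism is identical, but your formulation has the small advantage of making transparent that the star-shapedness hypothesis plays no role: the lemma holds for any Borel set $K$, since what is really being used is only that $\rho B_2^n$ is a sublevel set of $|x|^2$. (The paper needs star-shapedness only because it chooses to represent $K$ via its radial function $\rho_K$; your argument shows this is an artifact of the presentation, not a genuine hypothesis.) Your closing remark about the role of the lemma in the paper is also accurate: it feeds into \eqref{eq:5-1} to turn the $|x|^2/n^2$ term from \eqref{eq:last-th-3} into a quantitative improvement depending only on $\gamma_n(K)$.
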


\begin{proof}
We will denote by $\mb{S}^{n-1}$ the unit sphere in $\R^n$ and by $\rho_K:\mb{S}^{n-1}\to[0,\infty]$ the radial function of $K$, i.e.~$\rho_K(\theta) = \sup\{r\geq0: \ r\theta\in K\}$. Let $A=\{\theta\in\mb{S}^{n-1}: \ \rho\leq\rho_K(\theta)\}$ \mbox{and $B=\mb{S}^{n-1}\setminus A$. Then,}
\begin{equation*}
\begin{split}
0 =(2\pi)^{n/2}\big( \gamma_n(K) - \gamma_n(\rho B_2^n) \big) & = \int_{\mb{S}^{n-1}} \int_0^{\rho_K(\theta)} r^{n-1} e^{-r^2/2}\,\diff r\diff\theta - \int_{\mb{S}^{n-1}}\int_0^\rho r^{n-1} e^{-r^2/2}\,\diff r\diff \theta
\\ & = \int_A \int_\rho^{\rho_K(\theta)} r^{n-1} e^{-r^2/2}\,\diff r\diff\theta - \int_B \int_{\rho_K(\theta)}^\rho r^{n-1} e^{-r^2/2}\,\diff r\diff\theta.
\end{split}
\end{equation*}
Therefore, using polar coordinates once again, we get
\begin{equation}
\begin{split}
(2\pi)^{n/2}\Big( \int_K |x|^2 & \,\diff\gamma_n(x)  - \int_{\rho B_2^n} |x|^2\,\diff\gamma_n(x)\Big) 
\\ & =  \int_A \int_\rho^{\rho_K(\theta)} r^{n+1} e^{-r^2/2}\,\diff r\diff\theta - \int_B \int_{\rho_K(\theta)}^\rho r^{n+1} e^{-r^2/2}\,\diff r\diff\theta
\\ & \geq \rho^2 \Big(\int_A \int_\rho^{\rho_K(\theta)} r^{n-1} e^{-r^2/2}\,\diff r\diff\theta - \int_B \int_{\rho_K(\theta)}^\rho r^{n-1} e^{-r^2/2}\,\diff r\diff\theta \Big) =0
\end{split}
\end{equation}
and the conclusion readily follows.
\end{proof}

We are now well equipped to complete the proof of Theorem \ref{thm:psi}.

\begin{proof} [Proof of Theorem \ref{thm:psi}]
Let $K$ be a symmetric convex set in $\R^n$ and $u:K\to\R$ a smooth symmetric function with $\ms{L}u=1$ on $K$. Then, by \eqref{eq:last-th-3} and Lemma \ref{lem:balls}, we have
\begin{equation} \label{eq:5-1}
\int \|\nabla^2u\|_{\mr{HS}}^2 + |\nabla u|^2 \,\diff\gamma_K \stackrel{\eqref{eq:last-th-3}}{\geq} \frac{1}{n^2} \int |x|^2\diff\gamma_K(x) + \frac{1}{n} \stackrel{\eqref{eq:lemballs}}{\geq} \frac{1}{n^2 \gamma_n(K)} \int_{\rho B_2^n} |x|^2 \diff\gamma_n(x) + \frac{1}{n},
\end{equation}
where $\gamma_n(K)=\gamma_n(\rho B_2^n)$ or, equivalently, $\rho = \Psi_n^{-1}(\gamma_n(K))$. Moreover, integration by parts gives
\begin{equation} \label{eq:5-2}
\begin{split}
\int_{\rho B_2^n} |x|^2\,\diff\gamma_n(x) = \frac{|\mb{S}^{n-1}|}{(2\pi)^{n/2}} \int_0^\rho r^{n+1} e^{-r^2/2}\,\diff r & =  \frac{|\mb{S}^{n-1}|}{(2\pi)^{n/2}} \Big(n\int_0^\rho r^{n-1}e^{-r^2/2} \,\diff r- \rho^n e^{-\rho^2/2}\Big)
\\ & = n\gamma_n(K) - c_n \rho^n e^{-\rho^2/2},
\end{split}
\end{equation}
where $c_n^{-1} = 2^{\frac{n}{2}-1}\Gamma(n/2)$. Combining \eqref{eq:5-1} and \eqref{eq:5-2}, we get
\begin{equation} \label{eq:get-psi}
\begin{split}
\int \|\nabla^2u\|_{\mr{HS}}^2 + |\nabla u|^2 \,\diff\gamma_K \geq \frac{2}{n} &- \frac{c_n}{n^2\gamma_n(K)}\rho^n e^{-\rho^2/2} 
\\ & =  \frac{2}{n} - \frac{c_n}{n^2\gamma_n(K)} \Psi_n^{-1}(\gamma_n(K))^n e^{-\Psi_n^{-1}(\gamma_n(K))^2/2}
\end{split}
\end{equation}
and the conclusion follows readily by the definition \eqref{eq:def-psi} of $\sigma_n$ and Proposition \ref{prop:psi-local-to-global}.
\end{proof}

\begin{remark} \label{rem:sigma}
To see that \eqref{eq:sigma} is a strict strengthening of \eqref{eq:g-z}, it suffices to observe that the function $\tau_n(x) = \sigma_n^{-1}(x)^{\frac{1}{n}}$ is increasing and concave, since then we can write
\begin{equation} \label{tau-is-concave}
\begin{split}
\gamma_n\big(\lambda K+(1-&\lambda)L\big)^{\frac{1}{n}}  = \tau_n \circ\sigma_n\big(\gamma_n(\lambda K+(1-\lambda)L)\big)\\& \stackrel{\eqref{eq:sigma}}{\geq} \tau_n\big(\lambda\sigma_n(\gamma_n(K)) + (1-\lambda)\sigma_n(\gamma_n(L)) \big)
\\ & \geq \lambda \tau_n\circ\sigma_n \big(\gamma_n(K)\big) + (1-\lambda) \tau_n\circ\sigma_n\big(\gamma_n(L)\big) = \lambda\gamma_n(K)^\frac{1}{n} + (1-\lambda)\gamma_n(L)^\frac{1}{n}.
\end{split}
\end{equation}
In order to prove that $\tau_n$ is concave, we will instead show that $\tau_n^{-1}(y) = \sigma_n(y^n)$ is convex. Indeed, the second derivative condition $(\tau_n^{-1})''(y)\geq0$ can be equivalently rewritten as
\begin{equation} \label{eq:second-deriv}
\forall \ y\in(0,\infty), \ \ \ \ \frac{\sigma_n''(y^n) y^n}{\sigma_n'(y^n)} \geq - \frac{n-1}{n},
\end{equation}
which readily follows from \eqref{eq:def-psi} and \eqref{eq:5-2}.
\end{remark}

\begin{proof} [Proof of Corollary \ref{cor}]
Using the notation of the previous remark, we see from \eqref{eq:5-1} and \eqref{eq:second-deriv} that $\tau_n$ is in fact a strictly concave and strictly increasing function. Suppose now that for some symmetric convex sets $K,L$ in $\R^n$, we have
\begin{equation} \label{eq:equal-condi}
\gamma_n\big(\lambda K+(1-\lambda)L\big)^{\frac{1}{n}} = \lambda \gamma_n(K)^{\frac{1}{n}}+(1-\lambda)\gamma_n(L)^{\frac{1}{n}}.
\end{equation}
Then, since the second inequality in \eqref{tau-is-concave} becomes equality, we deduce that $\sigma_n(\gamma_n(K)) = \sigma_n(\gamma_n(L))$, which implies that $\gamma_n(K)=\gamma_n(L)$. Combining this with the equality condition \eqref{eq:equal-condi}, we have
\begin{equation}
\gamma_n(K) = \gamma_n\big(\lambda K+(1-\lambda)L\big) = \gamma_n(L),
\end{equation}
which can only hold when $K=L$, e.g. by \cite{Dub77} or \cite{Ehr86}.
\end{proof} 


\section{Further remarks} \label{sec:3}

We conclude with a few additional remarks on Brunn--Minkowski-type inequalities for measures.

\smallskip

\noindent {\bf 1.} In \cite{KL18}, Kolesnikov and Livshyts showed that when $K$ is a convex set containing the origin and $u:K\to\R$ is a smooth function satisfying $\ms{L}u=1$ on $K$, then 
\begin{equation}
\int \|\nabla^2u\|_{\mr{HS}}^2 + |\nabla u|^2 \,\diff\gamma_K \geq\frac{1}{2n}
\end{equation}
in the following way. By omitting the traceless part of $\nabla^2u$, \eqref{eq:trace-decomp} gives 
\begin{equation*}
\int \|\nabla^2u\|_{\mr{HS}}^2 + |\nabla u|^2 \,\diff\gamma_K \stackrel{\eqref{eq:trace-decomp}}{\geq} \int \frac{(\Delta u)^2}{n} +|\nabla u|^2\,\diff\gamma_K = \int \frac{1}{n}\Big(1+\sum_{i=1}^n x_i\partial_iu(x)\Big)^2 + |\nabla u(x)|^2\,\diff\gamma_K(x),
\end{equation*}
where in the equality we used that $\ms{L}u=1$. Pointwise minimizing the right hand side over all $u_1,\ldots,u_n\in\R$ (see \cite[Lemma~2.4]{KL18}), they get
\begin{equation} \label{eq:galyna-sasha}
\ms{G}(u) \eqdef \int \frac{1}{n}\Big(1+\sum_{i=1}^n x_i\partial_iu(x)\Big)^2 + |\nabla u(x)|^2\diff\gamma_K(x) \geq \int \frac{1}{|x|^2+n}\,\diff\gamma_K(x)
\end{equation}
and this quantity is always greater than $\tfrac{1}{2n}$ by a simple geometric inequality \cite[Lemma~5.3]{KL18}. It can easily be seen that when $K=\R^n$ this quantity is in fact $\tfrac{1}{2n}+ o\big(\tfrac{1}{n}\big)$. One indication that this argument can be improved is the fact that inequality \eqref{eq:galyna-sasha} is saturated for the function $u_0(x) = -\tfrac{1}{2}\log(|x|^2+n)$ that satisfies
\begin{equation}
\forall \ x\in\R^n, \ \ \ \ \ms{L}u_0(x) = \frac{|x|^2-n}{|x|^2+n}+\frac{2|x|^2}{(|x|^2+n)^2}
\end{equation}
and this is very far from the imposed constraint $\ms{L}u=1$ (e.g.~when $|x|\sim\sqrt{n}$). 

In the proof of Theorem \ref{thm:local-g-z} above, the traceless part $\widehat{\nabla}^2u$ of the Hessian of $u$ is controlled via the Brascamp--Lieb inequality \eqref{eq:brascamp-lieb}. The following proposition shows that by crudely omitting it, one cannot substantially improve the result of \cite{KL18}, even for very simple symmetric convex sets. In what follows, we will adopt the following shorthand notation: Given any $x=(x_1, x_2, \ldots, x_n)\in \mathbb{R}^n$, we will denote by $x'$ the vector $(x_2, x_3, \ldots, x_n)\in \mathbb{R}^{n-1}$, so that $x = (x_1, x')$. For any $\e\in(0,1)$, let $\ms{S}_{\e}=\{|x_1|<\e\}\cap\{|x'|^2<2n\}\subset \mathbb{R}^n$ be a truncated slab of width $\e$. 

\begin{proposition}
There exists an absolute constant $C\in(0,\infty)$, such that, for every $n\geq2$ and every small enough $\e\in(0,1)$, the functional $\ms{G}$ associated to the slab $\ms{S}_{\e}$ in $\R^n$ satisfies
\begin{equation}
\inf \big\{\ms{G}(u): \ u\in C^2(\ms{S}_{\e}) \mbox{ with }  \ms{L}u=1\big\} \le \frac{1}{2n}+C\e^2+o\Big(\frac{1}{n}\Big). \label{Upper Bound G}
\end{equation}
\end{proposition}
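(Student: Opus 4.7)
The plan is to exhibit an explicit $u \in C^2(\ms{S}_\e)$ with $\ms{L}u=1$ whose functional value $\ms{G}(u)$ nearly saturates the pointwise lower bound $\int \frac{1}{n+|x|^2}\,\diff\gamma_{\ms{S}_\e}$ underlying \eqref{eq:galyna-sasha}. The guiding idea is that on the thin slab the $x_1$-direction contributes to $\ms{G}$ only quantities of order $\e^2$, leaving enough flexibility to choose $u$ whose $x'$-gradient approximates the pointwise minimizer $-x'/(n+|x'|^2)$ of \eqref{eq:galyna-sasha}, while using the $x_1$-degree of freedom to enforce the PDE constraint $\ms{L}u=1$.

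Concretely, set $\tilde{u}_0(x')\eqdef -\tfrac{1}{2}\log(n+|x'|^2)$ and $h(x')\eqdef 1-\ms{L}'\tilde{u}_0(x')$, where $\ms{L}'$ denotes the Ornstein--Uhlenbeck operator on $\R^{n-1}$, and consider the initial ansatz
\begin{equation*}
u^{(0)}(x)\eqdef \tilde{u}_0(x')+\tfrac{x_1^2}{2}\,h(x').
\end{equation*}
Using the splitting $\ms{L}=\ms{L}_1+\ms{L}'$, a direct computation yields $\ms{L}u^{(0)} = 1 + x_1^2\bigl(\tfrac12 \ms{L}'h(x')-h(x')\bigr)$, so the residual is uniformly $O(\e^2)$ on $\ms{S}_\e$. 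Defining iteratively $H_0=\tilde{u}_0$, $H_1=h$, and $H_{k+1}=2kH_k-\ms{L}'H_k$ for $k\geq 1$, the formal series $u(x)=\sum_{k\geq 0}\tfrac{x_1^{2k}}{(2k)!}H_k(x')$ satisfies $\ms{L}u=1$ on $\ms{S}_\e$ term-by-term; absolute convergence in $C^2(\ms{S}_\e)$ for $\e$ below an $n$-dependent threshold follows from the explicit rational-in-$|x'|^2$ form of the iterates $H_k$ on $B'$. Alternatively, one truncates at $k=1$ and corrects by solving the small equation $\ms{L}\tilde u=-x_1^2(\tfrac12\ms{L}'h-h)$ on $\ms{S}_\e$ by standard elliptic theory, obtaining $\tilde u\in C^2(\ms{S}_\e)$ with $\|\tilde u\|_{C^2}=O(\e^4)$.

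The main computation is the estimate for $\ms{G}(u^{(0)})$. The product structure $\gamma_{\ms{S}_\e}=\gamma_1^\e\otimes\gamma_{n-1}^{B'}$ of the restricted measure, combined with $\E_{\gamma_1^\e}[x_1^2]=\tfrac{\e^2}{3}+O(\e^4)$ for small $\e$, reduces the analysis to expectations under $\gamma_{n-1}^{B'}$. The clean identity
\begin{equation*}
\Delta u^{(0)}=h+\Delta'\tilde{u}_0+O(\e^2)=1+x'\cdot\nabla'\tilde{u}_0+O(\e^2)=\tfrac{n}{n+|x'|^2}+O(\e^2)
\end{equation*}
follows from $h=1-\ms{L}'\tilde{u}_0$ and $\ms{L}'\tilde{u}_0=\Delta'\tilde{u}_0-x'\cdot\nabla'\tilde{u}_0$, while $|\nabla u^{(0)}|^2=x_1^2 h(x')^2+|\nabla'\tilde{u}_0|^2+O(\e^2/n)$. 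Summing the two contributions and invoking the pointwise identity $\tfrac{n}{(n+|x'|^2)^2}+\tfrac{|x'|^2}{(n+|x'|^2)^2}=\tfrac{1}{n+|x'|^2}$ yields
\begin{equation*}
\ms{G}(u^{(0)})\le \E_{\gamma_{n-1}^{B'}}\!\Big[\tfrac{1}{n+|x'|^2}\Big]+\tfrac{\e^2}{3}+o\bigl(\tfrac{1}{n}\bigr)=\tfrac{1}{2n}+\tfrac{\e^2}{3}+o\bigl(\tfrac{1}{n}\bigr),
\end{equation*}
where the last equality follows from a Taylor expansion of $t\mapsto 1/(n+t)$ around $t=n-1$ (together with $\mathrm{Var}_{\gamma_{n-1}}(|x'|^2)=O(n)$), and the truncation to $B'=\{|x'|^2<2n\}$ affects expectations by an exponentially small amount by Gaussian concentration.

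Finally, the contribution of the correction $\tilde u=u-u^{(0)}$ is absorbed via Cauchy--Schwarz on the bilinear form underlying $\ms{G}$: since $\ms{G}(\tilde u)=O(\e^8+\e^4/n)$, one has $\ms{G}(u)\le\bigl(\sqrt{\ms{G}(u^{(0)})}+\sqrt{\ms{G}(\tilde u)}\bigr)^2=\tfrac{1}{2n}+\tfrac{\e^2}{3}+o\bigl(\tfrac{1}{n}\bigr)$ for $\e$ sufficiently small depending on $n$, yielding the claim with $C=\tfrac13$. The main technical point is the verification of the $C^2$-estimates for the iterates $H_k$ on $B'$, or equivalently the well-posedness and regularity of the correction $\tilde u$ --- both routine exercises given the polynomial-rational structure of the functions involved.
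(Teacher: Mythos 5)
Your proposal takes a genuinely different route from the paper. The paper sets $u = u_0 + v$ where $u_0(x) = -\tfrac12\log(|x|^2+n)$ is the full-dimensional pointwise minimizer of the integrand in \eqref{eq:galyna-sasha} and $v$ solves the $O(1)$-size BVP $\ms{L}v = R$, $v|_{\partial\ms{S}_\e}=0$; crucially, because $u_0$ is the exact pointwise minimizer, the cross terms in $\ms{G}(u_0+v)$ vanish identically, giving the clean split $\ms{G}(u) = \int \tfrac{1}{|x|^2+n}\diff\gamma_{\ms{S}_\e} + \int\tfrac{(x\cdot\nabla v)^2}{n}+|\nabla v|^2\diff\gamma_{\ms{S}_\e}$, and only $\int|\nabla v|^2 = O(\e^2)$ needs to be shown. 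You instead build $u^{(0)}(x) = \tilde u_0(x') + \tfrac{x_1^2}{2}h(x')$ from the $(n-1)$-dimensional minimizer plus a quadratic-in-$x_1$ correction, arranged so that $\ms{L}u^{(0)} = 1 + O(\e^2)$, which buys you a much smaller remainder equation (source $O(\e^2)$ rather than $O(1)$) at the price of having to handle cross terms between $u^{(0)}$ and the correction $\tilde u$. The first-order computation $\ms{G}(u^{(0)}) = \tfrac{1}{2n}+\tfrac{\e^2}{3}+o(\tfrac1n)$ is correct, and the cancellation $h+\Delta'\tilde u_0 = 1+x'\cdot\nabla'\tilde u_0 = \tfrac{n}{n+|x'|^2}$ is the right identity.

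However, the final absorption step has a genuine gap. The functional $\ms{G}$ is not a quadratic form --- it is an affine shift of one --- and in particular $\ms{G}(\tilde u)\geq$ a quantity of order $\tfrac1n$ even when $\nabla\tilde u\equiv 0$ (the constant $1$ inside $(1+\sum x_i\partial_i\tilde u)^2$ survives). Hence the claim $\ms{G}(\tilde u)=O(\e^8+\e^4/n)$ is false, and the bound $\ms{G}(u)\leq\big(\sqrt{\ms{G}(u^{(0)})}+\sqrt{\ms{G}(\tilde u)}\big)^2$ would produce a constant strictly larger than $\tfrac1{2n}$, failing to prove the proposition. The correct version of the triangle inequality is $\sqrt{\ms{G}(u^{(0)}+\tilde u)}\leq\sqrt{\ms{G}(u^{(0)})}+\sqrt{Q(\nabla\tilde u)}$, where $Q(p)=\int\tfrac{(x\cdot p)^2}{n}+|p|^2\diff\gamma_{\ms{S}_\e}$ is the \emph{homogeneous} form governing the perturbation. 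Similarly, the claim $\|\tilde u\|_{C^2}=O(\e^4)$ cannot hold, since $\ms{L}\tilde u$ is only $O(\e^2)$; what one should (and can) prove instead is the Dirichlet-energy bound $\int|\nabla\tilde u|^2\diff\gamma_{\ms{S}_\e}=O(\e^6)$, via exactly the Poincar\'e-in-$x_1$ argument the paper uses for its $v$ (multiply by $\tilde u$, integrate by parts, and use that $\tilde u$ vanishes at $x_1=\pm\e$). Once these two points are repaired, one gets $\sqrt{Q(\nabla\tilde u)}=O(\e^3)$ and hence $\ms{G}(u)\leq\ms{G}(u^{(0)})+O(\e^3/\sqrt n+\e^4)$, which is within budget, so your overall strategy does succeed; but the argument as written does not close. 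A minor further point: the power-series construction of $u$ is asserted rather than proven to converge; the truncation-plus-correction alternative you mention is the one to actually use.
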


\begin{proof}
Let us define the function $u:\ms{S}_{\e} \rightarrow \mathbb{R}$ by the relation $u \eqdef u_0 + v$, where we denote by $u_0(x) = -\tfrac{1}{2} \log (|x|^2+n)$ the pointwise minimizer of the functional $\ms{G}$ 
and the function $v$ is the unique solution of the boundary value problem
\begin{equation}
\begin{cases}
\ms{L}{v} = R,\label{Johny BV problem}\\
v|_{\partial{\ms{S}_{\e}}} = 0,
\end{cases}
\end{equation}
where 
\begin{equation}
R(x) \eqdef \f{2n}{|x|^2+n}-\f{2|x|^2}{(|x|^2+n)^2}
\end{equation}
(the existence and uniqueness of $v$ follows from \cite[Theorem~6.13]{GilbargTrudinger}). Note that the function $u$ satisfies $\ms{L}u=1$ and that, by straightforward algebra, $\ms{G}(u)$ can be expressed as 
\begin{equation}
\ms{G}(u) = \f1n \int H\Big(\f{|x|^2}{n} \Big)\,\diff\gamma_{\ms{S}_{\e}}(x) + \int  \f1n \Big(\sum_{i=1}^n x_i \partial_i v(x)\Big)^2 +|\nabla v(x)|^2 \,\diff\gamma_{\ms{S}_{\e}}(x), \label{Functional G}
\end{equation}
where $H(t) = \f{1}{t+1}$ for $t\geq0$.

Using the fact that $|x'|^2\le|x|^2\le|x'|^2+\e^2$ and $|x|^2<2n$ on $\ms{S}_{\e}$, we can bound 
\begin{equation}
\int H\Big(\f{|x|^2}{n} \Big) \,\diff\gamma_{\ms{S}_{\e}}(x)\le \int H\Big(\f{|x'|^2}{n} \Big) \,\diff\gamma_{\ms{S}_{\e}}(x)+ \f{\e^2}{n}, \label{First Bound H}
\end{equation}
since $H$ is 1-Lipschitz on the interval $[0,2]$. Furthermore, we can estimate
\begin{equation}
\begin{split}
\int & H\Big(\f{|x'|^2}{n} \Big) \,\diff\gamma_{\ms{S}_{\e}}(x) = \f{1}{\gamma_n(\ms{S}_{\e})}\int_{-\e}^{\e} \int_{\{|x'|^2<2n\}} H\Big(\f{|x'|^2}{n} \Big) \,\diff\gamma_{n-1}(x') \diff\gamma_1(x_1)\\
 &=\fint_{\big\{\f{|x'|^2}{n}<2\big\}} H\Big( \f{|x'|^2}{n}\Big) \diff\gamma_{n-1}(x')
 = H(1)+ \fint_{\big\{\f{|x'|^2}{n}<2\big\}} \Big( H\Big( \f{|x'|^2}{n}\Big) - H(1)\Big) \diff\gamma_{n-1}(x')\\
 & \le H(1)+ \fint_{\big\{\f{|x'|^2}{n}<2\big\}} \left|\f{|x'|^2}{n}-1\right| \diff\gamma_{n-1}(x') \le \f{1}{2} + o(1), \label{x^2=n}
\end{split}
\end{equation}
as $n\to\infty$. Thus, from (\ref{First Bound H}) and (\ref{x^2=n}), we obtain the following bound for the first summand of (\ref{Functional G}),
\begin{equation}
\f1n \int H\Big(\f{|x|^2}{n} \Big)\,\diff\gamma_{\ms{S}_{\e}}(x) \le \f{1}{2n} +\f{\e^2}{n} + o\Big(\f1n\Big), \label{First Summand}
\end{equation}
as $n\to\infty$. In order to complete the proof of (\ref{Upper Bound G}), it suffices to show that the second summand of (\ref{Functional G}) satisfies
\begin{equation}
\int \f1n \Big(\sum_{i=1}^n x_i \partial_i v(x)\Big)^2 +|\nabla v(x)|^2\,\diff\gamma_{\ms{S}_{\e}}(x) \le C \e^2 \label{Smallness Poincare Final}
\end{equation} 
for some absolute constant $C>0$, provided that $\e \ll 1$. In view of the bound
\begin{equation}
\int \f1n \Big(\sum_{i=1}^n x_i \partial_i v(x)\Big)^2 +|\nabla v(x)|^2\,\diff\gamma_{\ms{S}_{\e}}(x) \le \int  \Big(\f{|x|^2}{n}+1\Big)|\nabla v(x)|^2\,\diff\gamma_{\ms{S}_{\e}}(x) \le 4  \int |\nabla v|^2 \,\diff\gamma_{\ms{S}_{\e}},
\end{equation}
which follows from the upper bound $|x|^2<2n+\e^2$ on $\ms{S}_{\e}$, it suffices to establish that
\begin{equation}
\int |\nabla v|^2 \, \diff\gamma_{\ms{S}_{\e}} \le C \e^2.
\end{equation} 
Therefore, the following lemma completes the proof of  (\ref{Upper Bound G}).
\end{proof}

\begin{lemma}
For any $\e>0$, the function $v:\ms{S_{\e}}\rightarrow \mathbb{R}$ defined by (\ref{Johny BV problem}) satisfies 
\begin{equation}
\int |\nabla v|^2 \, \diff\gamma_{\ms{S}_{\e}} \le 36 e^{\e^2/2} \e^2 \label{Poincare Smallness}
\end{equation}
\end{lemma}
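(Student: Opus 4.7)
The plan is to prove the estimate by combining an integration-by-parts identity with a Poincar\'e-type inequality that exploits the fact that $v$ vanishes on the flat faces $\{|x_1|=\e\}$ of the thin slab $\ms{S}_\e$. The Ornstein--Uhlenbeck operator $\ms{L}$ is formally self-adjoint with respect to $\gamma_n$, and since $v\equiv 0$ on $\partial\ms{S}_\e$ (both on the flat and on the curved parts of the boundary), integration by parts yields
\begin{equation}
\int |\nabla v|^2 \,\diff\gamma_{\ms{S}_\e} = -\int v \,\ms{L}v \,\diff\gamma_{\ms{S}_\e} = -\int v \, R \,\diff\gamma_{\ms{S}_\e},
\end{equation}
and an application of the Cauchy--Schwarz inequality bounds this by $(\int R^2 \,\diff\gamma_{\ms{S}_\e})^{1/2} (\int v^2 \,\diff\gamma_{\ms{S}_\e})^{1/2}$.

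For the factor involving $R$, I would use the pointwise estimates $\frac{2n}{|x|^2+n}\le 2$ (since $|x|^2\ge 0$) and, by AM-GM, $\frac{2|x|^2}{(|x|^2+n)^2}\le \frac{1}{2n}\le\frac{1}{2}$, so that $|R|\le 3$ throughout $\ms{S}_\e$, giving $\int R^2 \diff\gamma_{\ms{S}_\e}\le 9$. For the factor involving $v^2$, the key observation is that since $v(\pm\e,x')=0$ for every admissible $x'$, the fundamental theorem of calculus together with Cauchy--Schwarz yields the one-dimensional Poincar\'e estimate
\begin{equation}
\int_{-\e}^\e v(x_1,x')^2\,\diff x_1 \le 4\e^2 \int_{-\e}^\e (\partial_1 v(t,x'))^2\,\diff t.
\end{equation}
Inserting the Gaussian weight $e^{-x_1^2/2}$ and using the trivial bounds $e^{-x_1^2/2}\le 1$ and $e^{-t^2/2}\ge e^{-\e^2/2}$ on $[-\e,\e]$, then integrating against $e^{-|x'|^2/2}\1_{\{|x'|^2<2n\}}\,\diff x'$ and normalizing by $\gamma_n(\ms{S}_\e)$, I obtain the weighted slab Poincar\'e inequality
\begin{equation}
\int v^2 \,\diff\gamma_{\ms{S}_\e} \le 4\e^2 e^{\e^2/2} \int (\partial_1 v)^2 \,\diff\gamma_{\ms{S}_\e} \le 4\e^2 e^{\e^2/2}\int |\nabla v|^2 \,\diff\gamma_{\ms{S}_\e}.
\end{equation}

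Writing $I\eqdef \int |\nabla v|^2 \,\diff\gamma_{\ms{S}_\e}$, the three ingredients combine to give $I\le 3\cdot (4\e^2 e^{\e^2/2})^{1/2} I^{1/2} = 6\e e^{\e^2/4} I^{1/2}$, and hence $I\le 36\e^2 e^{\e^2/2}$, which is the claim. The only slightly delicate point is justifying the integration by parts on the (piecewise smooth) domain $\ms{S}_\e$ with Dirichlet data; this follows from standard elliptic regularity for the solution of \eqref{Johny BV problem}, via \cite[Theorem~6.13]{GilbargTrudinger} invoked in the construction of $v$, together with the fact that the outer boundary contribution vanishes because $v$ itself vanishes on $\partial \ms{S}_\e$.
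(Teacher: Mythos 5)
Your proof is correct and follows essentially the same route as the paper: test the equation against $v$, integrate by parts, apply Cauchy--Schwarz, and close the estimate with a one-dimensional Poincar\'e inequality in the $x_1$-variable that exploits the Dirichlet condition on the flat faces of the thin slab, together with the pointwise bound $|R|\le 3$. The only cosmetic difference is in how the Gaussian weight is inserted into the 1D Poincar\'e step (you bound $e^{-x_1^2/2}\le 1$ and $e^{-t^2/2}\ge e^{-\e^2/2}$ directly, whereas the paper threads an $e^{-s^2/4}$ factor through Cauchy--Schwarz), but both yield the identical constant $4\e^2 e^{\e^2/2}$ and hence the same final bound.
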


\begin{proof}

Let us multiply equation  (\ref{Johny BV problem}) with $v$ and integrate over $\ms{S}_{\e}$, obtaining the relation
\begin{equation}
\int v \cdot \ms{L}v \, \diff\gamma_{\ms{S}_{\e}} = \int v \cdot R \, \diff\gamma_{\ms{S}_{\e}}. \label{Before Multiplier}
\end{equation}
Integrating by parts in the left hand side of (\ref{Before Multiplier}) then yields
\begin{equation}
\begin{split}
\int |\nabla v|^2 \, \diff\gamma_{\ms{S}_{\e}} &= -\int v \cdot R \, \diff\gamma_{\ms{S}_{\e}}  \le \Big(\int v^2 \, \diff\gamma_{\ms{S}_{\e}}\Big)^{\f12}\Big(\int R^2 \, \diff\gamma_{\ms{S}_{\e}}\Big)^{\f12}. \label{Multiplier}
\end{split}
\end{equation}
Since $v$ vanishes on $\{x_1 = -\e\}\cap\{|x'|^2 < 2n\} \subset \partial \ms{S}_{\e}$, for any $x=(x_1, x')\in \ms{S}_{\e}$, we have
\begin{equation}
\begin{split}
|v(x_1,x')| = \Big|\int_{-\e}^{x_1} \partial_1 v(s,x') \, \diff s\Big| & \le \int_{-\e}^{\e} \big|\partial_1 v(s,x')\big| \, \diff s\\ \label{Fundamental theorem of calculus}
 & \le  e^{\e^2/4}\int_{-\e}^{\e} e^{-s^2/4}\big|\partial_1 v(s,x')\big| \, \diff s.
\end{split}
\end{equation}
Therefore, using (\ref{Fundamental theorem of calculus}), we can estimate 
\begin{equation}
\begin{split}
\int v^2  \,\diff\gamma_{\ms{S}_{\e}} & = \f{1}{\gamma_n({\ms{S}_{\e}})} \int_{\ms{S}_{\e}} v^2(x_1,x') \,\diff\gamma_n(x_1,x') \\
 &\stackrel{(\ref{Fundamental theorem of calculus})}{\le}  \f{1}{\gamma_n({\ms{S}_{\e}})}\int_{\ms{S}_{\e}} e^{\e^2/2}\Big( \int_{-\e}^{\e} e^{-s^2/4} \big|\partial_1 v(s,x')\big| \,  \diff s \Big)^2 \,\diff\gamma_n(x_1,x') \\
 & \le \f{e^{\e^2/2}}{\gamma_n({\ms{S}_{\e}})}\int_{\ms{S}_{\e}} 2\e \Big( \int_{-\e}^{\e} e^{-s^2/2}(\partial_1 v(s,x'))^2 \,  \diff s \Big) \,\diff\gamma_n(x_1,x') \\
 & = \f{2e^{\e^2/2}\e}{\gamma_n({\ms{S}_{\e}})}\int_{\{|x'|^2<2n\}}\int_{-\e}^{\e} \Big( \int_{-\e}^{\e} e^{-s^2/2}(\partial_1 v(s,x'))^2 \,  \diff s \Big) \,\diff\gamma_1(x_1)\, \diff\gamma_{n-1}(x')\\
& = \f{2e^{\e^2/2}\e}{\gamma_n({\ms{S}_{\e}})}\int_{\{|x'|^2<2n\}}\Big( \int_{-\e}^{\e} \diff\gamma_1(x_1)\Big) \Big( \int_{-\e}^{\e} e^{-s^2/2}(\partial_1 v(s,x'))^2 \,  \diff s \Big) \,\diff\gamma_{n-1}(x') \\
 & \le \f{2e^{\e^2/2}\e}{\gamma_n({\ms{S}_{\e}})} \int_{\{|x'|^2<2n\}} \f{2\e}{\sqrt{2\pi}}\Big( \int_{-\e}^{\e} e^{-s^2/2}(\partial_1 v(s,x'))^2 \,  \diff s \Big) \,\diff\gamma_{n-1}(x')\\
& = \f{4e^{\e^2/2}\e^2}{\gamma_n({\ms{S}_{\e}})} \int_{\{|x'|^2<2n\}} \int_{-\e}^{\e} (\partial_1 v(s,x'))^2 \, \diff\gamma_1 (s) \,\diff\gamma_{n-1}(x') \le 4e^{\e^2/2}\e^2 \int |\nabla v|^2 \,\diff\gamma_{\ms{S}_{\e}}. \label{Improved Poincare}
\end{split}
\end{equation}
Finally, using the trivial pointwise bound 
\begin{equation}
|R(x)|\le  \f{2n}{|x|^2+n}  +  \f{|x|^2}{(|x|^2+n)^2} < 2 +\f{1}{n} \le 3
\end{equation}
along with (\ref{Improved Poincare}), we obtain from (\ref{Multiplier}) that
\begin{equation}
\int |\nabla v|^2 \, \diff\gamma_{\ms{S}_{\e}} \le 6e^{\e^2/4}\e \Big(\int |\nabla v|^2 \, \diff\gamma_{\ms{S}_{\e}}\Big)^{\f12} \label{Final Bound Slab}
\end{equation}
and the conclusion readily follows.
\end{proof}

\smallskip

\noindent {\bf 2.} The bound of Theorem \ref{thm:local-g-z} can be sharpened when $K$ is $\rho B_2^n$, the Euclidean ball of radius $\rho\in(0,\infty)$. Let $u$ be a smooth symmetric function with $\ms{L}u=1$ on $\rho B_2^n$ and $u(0)=0$. Then, for every $r\in[0,\rho)$ and $\theta\in\mb{S}^{n-1}$, we can write $u(r\theta) = u_0(r) + v(r\theta)$, where
\begin{equation}
\forall \ r\in[0,\rho), \ \ \ \ u_0(r) = \frac{1}{|\mb{S}^{n-1}|} \int_{\mb{S}^{n-1}} u(r\phi)\,\diff\phi \ \ \ \mbox{and} \ \ \ \int_{\mb{S}^{n-1}} v(r\phi)\,\diff\phi=0.
\end{equation}
For every $r\in(0,\rho)$, we have
\begin{equation} \label{eq:u_0-eq}
u_0''(r) + \Big(\frac{n-1}{r}-r\Big) u_0'(r) = \ms{L}u_o(r) = \frac{1}{|\mb{S}^{n-1}|} \int_{\mb{S}^{n-1}} \ms{L}u(r\phi)\,\diff\phi = 1,
\end{equation}
hence also $\ms{L}v = \ms{L}u-\ms{L}u_0=0$. Moreover, notice that $u_0(0)=0$ and
\begin{equation}
u_0'(0)=\lim_{r\to0} \frac{1}{|\mb{S}^{n-1}|} \int_{\mb{S}^{n-1}}  \sum_{i=1}^n \frac{\phi_ix_i}{r} \partial_iu(r\phi)\,\diff\phi = \frac{1}{|\mb{S}^{n-1}|} \int_{\mb{S}^{n-1}}  \sum_{i=1}^n \phi_i \lim_{r\to0} \frac{x_i}{r}\partial_iu(r\phi)\,\diff\phi=0,
\end{equation}
since $u$ is even and thus $\partial_iu(0)=0$. Additionally, It can be shown that we can write
\begin{equation}
\int \|\nabla^2u\|_{\mr{HS}}^2 + |\nabla u|^2 \,\diff\gamma_{\rho B_2^n} = \int \|\nabla^2u_0\|_{\mr{HS}}^2 + |\nabla u_0|^2 \,\diff\gamma_{\rho B_2^n} + \int \|\nabla^2v\|_{\mr{HS}}^2 + |\nabla v|^2 \,\diff\gamma_{\rho B_2^n}.
\end{equation}
Combining the above observations, we deduce that
\begin{equation*}
\min\left\{ \int \|\nabla^2u\|_{\mr{HS}}^2 + |\nabla u|^2 \,\diff\gamma_{\rho B_2^n}: \ u\in C^2(\rho B_2^n) \mbox{ with }\ms{L}u=1 \right\} =  \int \|\nabla^2u_0\|_{\mr{HS}}^2 + |\nabla u_0|^2 \,\diff\gamma_{\rho B_2^n},
\end{equation*}
where $u_0$ is the unique solution of \eqref{eq:u_0-eq} satisfying $u_0(0)=u_0'(0)=0$. An explicit calculation now shows that this function $u_0$ satisfies
\begin{equation}
 \int \|\nabla^2u_0\|_{\mr{HS}}^2 + |\nabla u_0|^2 \,\diff\gamma_{\rho B_2^n} = 1-\left(\frac{n-1}{\rho}-\rho\right) \frac{\int_0^\rho r^{n-1}e^{-r^2/2}\diff r}{\rho^{n-1}e^{-\rho^2/2}} > \frac{1}{n}.
\end{equation}

\smallskip

\noindent {\bf 3.} In view of the simplicity of the proof of Theorem \ref{thm:local-g-z}, it is natural to wonder whether a similar argument can be employed to prove dimensional Brunn--Minkowski inequalities for measures other than the Gaussian. If $\mu$ is a symmetric log-concave measure on $\R^n$ with $\diff\mu(x) =e^{-V(x)}\diff x$, the action of the corresponding elliptic operator $\ms{L}_\mu$ on a smooth function $u:\R^n\to\R$ is given by
\begin{equation}
\forall x\in\R^n, \ \ \ \ \ms{L}_\mu u(x) = \Delta u(x) - \langle \nabla V(x),\nabla u(x)\rangle.
\end{equation}
Proposition \ref{prop:kl} is a special case of the following more general statement (see \cite{KL18}). Let $\mu$ be a symmetric log-concave measure on $\R^n$ with $\diff\mu(x) =e^{-V(x)}\diff x$ and suppose that for every symmetric convex set $K$ in $\R^n$, every smooth symmetric function $u:K\to\R$ with $\ms{L}_\mu u=1$ on $K$ satisfies
\begin{equation} \label{eq:lb for mu}
\int \|\nabla^2 u\|_{\mathrm{HS}}^2 + \langle \nabla^2 V\nabla u,\nabla u\rangle\,\diff\mu_K \geq \frac{\delta}{n},
\end{equation}
for some $\delta\in[0,1]$; here $\mu_K$ is the rescaled restriction of $\mu$ on $K$. Then, for every symmetric convex sets $K,L$ in $\R^n$ and $\lambda\in(0,1)$, we have
\begin{equation}
\mu\big(\lambda K+(1-\lambda)L\big)^{\frac{\delta}{n}} \geq \lambda\mu(K)^{\frac{\delta}{n}}+(1-\lambda)\mu(L)^{\frac{\delta}{n}}.
\end{equation}
Suppose now that the Hessian of the potential of $\mu$ satisfies $\nabla^2V \geq\msf{Id}$. Then, arguing exactly as in the proof of Theorem \ref{thm:local-g-z} via \eqref{break-hessian} and the Brascamp-Lieb inequality \eqref{eq:brascamp-lieb} applied to $\nabla(u-r)$, we see that every smooth function $u:K\to\R$ with $\ms{L}_\mu u=1$ on $K$ satisfies
\begin{equation}
\begin{split}
\int & \|\nabla^2 u\|_{\mathrm{HS}}^2 + \langle \nabla^2 V\nabla u,\nabla u\rangle\diff\mu
\\ & \geq \int |\nabla u(x)|^2+\langle \nabla^2 V(x)\nabla u(x),\nabla u(x)\rangle + \frac{2}{n} \langle \nabla V(x)-x,\nabla u(x)\rangle + \frac{|x|^2}{n^2}+\frac{1}{n}\,\diff\mu_K(x).
\end{split}
\end{equation}
However, this estimate is not sufficient to deduce inequality \eqref{eq:lb for mu} with $\delta=1$ even for $n=2$. To see this, fix $N>\!\!\!>1$ and consider the log-concave probability measure $\diff\mu(x) = ce^{-Nx_1^2-x_2^2}\diff x$, where $c$ is a normalizing constant, and the planar rectangle $\ms{R} =[-a,a]\times[-b,b]$ with $a=N^{-1/2}$ and $b=e^{-N}$. Let $u:\ms{R}\to\R$ be the unique solution of the boundary value problem
\begin{equation}
\begin{cases}
\ms{L}_\mu{u} = 1, & \mbox{on } \ms{R}\label{Johny BV problem 2}\\
\partial_1u(a,x_2) = - \partial_1u(-a,x_2) = -N^{-4/3}, & \mbox{for } x_2\in(-b,b)\\
\partial_2u(x_1,b) = -\partial_2u(x_1,-b) = \eta_N, & \mbox{for } x_1\in(-a,a)
\end{cases},
\end{equation}
where $\eta_N$ satisfies the necessary compatibility condition
\begin{equation}
\frac{\exp(-Na^2/2)}{\int_{-a}^a \exp(-Nx_1^2/2)\,\diff x_1}N^{-4/3} = \frac{\exp(-b^2/2)}{\int_{-b}^b \exp(-x_2^2/2)\,\diff x_2} \eta_N - \frac{1}{2}.
\end{equation}
Then, explicit (though tedious) computations show that
\begin{equation}
\int \langle \nabla V(x)-x,\nabla u(x)\rangle\,\diff\mu_\ms{R}(x) \lesssim -\frac{1}{N^{4/3}},
\end{equation}
which in turn implies
\begin{equation}
 \int |\nabla u(x)|^2+\langle \nabla^2 V(x)\nabla u(x),\nabla u(x)\rangle + \langle \nabla V(x)-x,\nabla u(x)\rangle + \frac{|x|^2}{4}\,\diff\mu_\ms{R}(x) \lesssim -\frac{1}{N^{4/3}},
\end{equation}
as all the remaining positive terms are of order at most $N^{-3/2}$. 

This reveals that the cancelations in the proof of Theorem \ref{thm:local-g-z} are specific to the Gaussian measure and in the case of a general symmetric log-concave measure $\mu$ with a lower bound on the Hessian of its potential, one would need to find a more delicate way to control the norm of $\widehat{\nabla}^2u$ than the vanilla application \eqref{eq:trace-decomp2}--\eqref{just-used-bl} of the Brascamp--Lieb inequality \eqref{eq:brascamp-lieb} on $\nabla(u-r)$.

\smallskip

\noindent {\bf 4.} If $\lambda\in(0,1)$, the $\lambda$-geometric mean of two symmetric convex sets $K,L$ in $\R^n$ is the set
\begin{equation}
K^\lambda L^{1-\lambda} \eqdef \bigcap_{\theta\in\mb{S}^{n-1}} \big\{x\in\R^n: \ \langle x,\theta\rangle \leq h_K(\theta)^\lambda h_L(\theta)^{1-\lambda}\big\},
\end{equation}
where the support function $h_M:\mb{S}^{n-1}\to\R_+$ of a symmetric convex set $M$ is $h_M(\theta) = \sup_{y\in M} \langle y,\theta\rangle$. The log-Brunn--Minkowski conjecture of \cite{BLYZ12}, asserts that for any such sets and $\lambda\in(0,1)$,
\begin{equation} \label{eq:log-bm}
\big| K^\lambda L^{1-\lambda}\big| \geq |K|^\lambda |L|^{1-\lambda}.
\end{equation}
In \cite{Sar16}, Saroglou proved that the log-Brunn--Minkowski conjecture formally implies the validity of \eqref{eq:log-bm}, with the Lebesgue measure $|\cdot|$ replaced by any symmetric log-concave measure $\mu$, that is
\begin{equation} \label{eq:log-bm2}
\mu\big(K^\lambda L^{1-\lambda}\big) \geq \mu(K)^\lambda \mu(L)^{1-\lambda}.
\end{equation}
Then, in \cite[Proposition~1]{LMNZ17}, the authors showed that \eqref{eq:log-bm2} implies the dimensional Brunn--Minkowski inequality \eqref{eq:dim-bm-mu} with $\delta=1$ for all symmetric convex sets $K,L$ in $\R^n$. We note here that in the special case where $\mu=\gamma_n$, one gets a slightly stronger result in the spirit of Question \ref{que}. Recall that $\Psi_n:[0,\infty)\to[0,1)$ is given by $\Psi_n(r) = \gamma_n(rB_2^n)$, where $rB_2^n=\{x\in\R^n: \ |x|\leq r\}$. We will need the following well-known lemma, which we could not locate in the literature.

\begin{lemma} \label{lem:rev-S}
Fix $n\in\N$ and let $A$ be a Borel measurable set in $\R^n$. If $\rho\in[0,\infty]$ is such that $\gamma_n(A)=\gamma_n(\rho B_2^n)$, then for every $t\in[0,1]$, we have $\gamma_n(tA)\geq\gamma_n(t\rho B_2^n)$.
\end{lemma}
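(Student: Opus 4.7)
My plan is to reduce the inequality to a one-weight rearrangement comparison against the Euclidean ball, viewed as the super-level set of $|x|$, and then conclude by a ``bathtub'' argument.

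First, I would rewrite $\gamma_n(tA)$ by the linear change of variables $y = tx$ in the defining integral. Since $d y = t^n d x$, this yields the identity
\begin{equation*}
\gamma_n(tA) = \frac{t^n}{(2\pi)^{n/2}} \int_A e^{-t^2 |x|^2/2}\,\diff x = t^n \int_A e^{(1-t^2)|x|^2/2}\,\diff \gamma_n(x),
\end{equation*}
and an analogous formula holds for $\rho B_2^n$ in place of $A$. Dividing by the common factor $t^n$, the lemma reduces to the inequality
\begin{equation*}
\int_A w_t\,\diff\gamma_n \geq \int_{\rho B_2^n} w_t\,\diff\gamma_n, \qquad w_t(x) \eqdef e^{(1-t^2)|x|^2/2}.
\end{equation*}

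The key observation is that for $t\in[0,1]$ the exponent $1-t^2$ is nonnegative, so the weight $w_t$ is nondecreasing as a function of $|x|$. Write $A$ and $\rho B_2^n$ as the disjoint unions
\begin{equation*}
A = (A\cap \rho B_2^n) \sqcup (A\setminus \rho B_2^n), \qquad \rho B_2^n = (A\cap \rho B_2^n) \sqcup (\rho B_2^n \setminus A).
\end{equation*}
The assumption $\gamma_n(A)=\gamma_n(\rho B_2^n)$ immediately gives $\gamma_n(A\setminus \rho B_2^n) = \gamma_n(\rho B_2^n\setminus A)$. On $A\setminus \rho B_2^n$ one has $|x|>\rho$, hence $w_t(x) \geq e^{(1-t^2)\rho^2/2}$, while on $\rho B_2^n\setminus A$ one has $|x|\leq \rho$, hence $w_t(x) \leq e^{(1-t^2)\rho^2/2}$.

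Combining these two bounds with the matching $\gamma_n$-masses yields
\begin{equation*}
\int_A w_t\,\diff\gamma_n - \int_{\rho B_2^n} w_t\,\diff\gamma_n = \int_{A\setminus \rho B_2^n} w_t\,\diff\gamma_n - \int_{\rho B_2^n\setminus A} w_t\,\diff\gamma_n \geq 0,
\end{equation*}
which is precisely the desired inequality (the case $\rho=\infty$, meaning $\gamma_n(A)=1$, is trivial since then $A$ agrees with $\R^n$ up to a $\gamma_n$-null set). There is no real obstacle here: the argument is a one-line bathtub/rearrangement principle, and the only substantive point is recognizing that the dilation scaling $y=tx$ converts the dilation statement about $\gamma_n$ into a comparison against the weight $w_t$, whose monotonicity in $|x|$ makes $\rho B_2^n$ the extremizer among all sets of a given Gaussian measure.
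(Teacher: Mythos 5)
Your proof is correct and takes essentially the same approach as the paper's: both exploit the change of variables $y=tx$, reduce to comparing integrals of a weight monotone in $|x|$ over $A\setminus\rho B_2^n$ and $\rho B_2^n\setminus A$ (which have equal $\gamma_n$-measure by hypothesis), and conclude by the bathtub principle. The only difference is presentational — you isolate the weight $w_t(x)=e^{(1-t^2)|x|^2/2}$ explicitly before applying the comparison, whereas the paper carries out the same steps in a single chain of (in)equalities.
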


\begin{proof}
Let $B=\rho B_2^n$. The assumption implies that $\gamma_n(A\setminus B)=\gamma_n(B\setminus A)$. Therefore,
\begin{equation}
\begin{split}
(2\pi)^{n/2}\big(\gamma_n(tA)-\gamma_n(tB)\big) & = \int_{t(A\setminus B)} e^{-|x|^2/2}\,\diff x - \int_{t(B\setminus A)} e^{-|x|^2/2}\,\diff x
\\ & = t^n\Big(\int_{A\setminus B} e^{-t^2 |y|^2/2} \,\diff y - \int_{B\setminus A} e^{-t^2 |y|^2/2}\,\diff y\Big)
\\ & \geq t^n e^{(1-t^2)\rho^2/2} \Big(\int_{A\setminus B} e^{- |y|^2/2} \,\diff y - \int_{B\setminus A} e^{- |y|^2/2}\,\diff y\Big)=0
\end{split}
\end{equation}
and the proof is complete.
\end{proof}

\begin{proposition}
Fix $n\in\N$. If \eqref{eq:log-bm2} holds true for $\mu=\gamma_n$, then for every symmetric convex sets $K,L$ in $\R^n$ and every $\lambda\in(0,1)$, we have
\begin{equation} \label{eq:eqeqeq}
\Psi_n^{-1}\big(\gamma_n\big(\lambda K+(1-\lambda)L\big)\big) \geq \sup_{p\in[0,1]} \left(\frac{\lambda}{p}\right)^p \left(\frac{1-\lambda}{1-p}\right)^{1-p}  \Psi_n^{-1}\big(\gamma_n(K)^p \gamma_n(L)^{1-p}\big).
\end{equation}
\end{proposition}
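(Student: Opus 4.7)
The plan is to combine three ingredients: a weighted AM--GM comparison of support functions, the hypothesis \eqref{eq:log-bm2} with $\mu=\gamma_n$, and the dilation monotonicity encapsulated in Lemma \ref{lem:rev-S}.

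Fix $p\in(0,1)$ and set $c(\lambda,p) \eqdef (\lambda/p)^p ((1-\lambda)/(1-p))^{1-p}$. For every $\theta\in \mb{S}^{n-1}$, writing
$$\lambda h_K(\theta)+(1-\lambda) h_L(\theta) = p\cdot \tfrac{\lambda}{p} h_K(\theta) + (1-p)\cdot \tfrac{1-\lambda}{1-p} h_L(\theta)$$
and applying weighted AM--GM pointwise yields $h_{\lambda K + (1-\lambda) L}(\theta) \geq c(\lambda,p)\, h_K(\theta)^p h_L(\theta)^{1-p}$. Since $K^p L^{1-p}$ is defined as the intersection of the half-spaces $\{\langle\cdot,\theta\rangle \leq h_K(\theta)^p h_L(\theta)^{1-p}\}$, this support function comparison translates into the set inclusion $c(\lambda,p)\, K^p L^{1-p}\subseteq \lambda K + (1-\lambda)L$. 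A further application of AM--GM gives $c(\lambda,p)\leq 1$, with equality iff $p=\lambda$.

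Now two facts close the argument. First, applying the hypothesis \eqref{eq:log-bm2} with $\mu=\gamma_n$ yields $\gamma_n(K^p L^{1-p}) \geq \gamma_n(K)^p \gamma_n(L)^{1-p}$. Second, Lemma \ref{lem:rev-S} applied to the Borel set $A = K^p L^{1-p}$ with $t=c(\lambda,p)\in[0,1]$, after using the strict monotonicity of $\Psi_n^{-1}$, gives $\Psi_n^{-1}(\gamma_n(c(\lambda,p) A))\geq c(\lambda,p)\, \Psi_n^{-1}(\gamma_n(A))$. Chaining the inclusion from the previous paragraph, this dilation bound, and \eqref{eq:log-bm2}, while using the monotonicity of $\Psi_n^{-1}$ at each step, we obtain
$$\Psi_n^{-1}\big(\gamma_n(\lambda K + (1-\lambda) L)\big) \geq c(\lambda, p)\, \Psi_n^{-1}\big(\gamma_n(K)^p \gamma_n(L)^{1-p}\big).$$
Taking the supremum over $p\in [0,1]$, with the endpoint limits $c(\lambda,0)=1-\lambda$ and $c(\lambda,1)=\lambda$, then yields \eqref{eq:eqeqeq}.

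No single step is difficult: each of the three ingredients is standard. The only piece of bookkeeping worth flagging is understanding how the scaling factor $c(\lambda,p)$ interacts with $\Psi_n^{-1}$, which is precisely why Lemma \ref{lem:rev-S} enters the argument; the substantive input is really the assumption \eqref{eq:log-bm2}, and the supremum over $p$ records the (AM--GM) cost of passing from the geometric mean $K^p L^{1-p}$ to the Minkowski sum.
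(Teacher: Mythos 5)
Your proof is correct and follows essentially the same route as the paper's: fix $p$, obtain the inclusion $c(\lambda,p)\,K^pL^{1-p}\subseteq \lambda K+(1-\lambda)L$ (the paper states this inclusion directly as $pA+(1-p)B\supseteq A^pB^{1-p}$ together with the homogeneity of the geometric mean, which is exactly the support-function AM--GM calculation you spell out), then chain it with Lemma \ref{lem:rev-S} applied at scale $c(\lambda,p)\in[0,1]$ and the hypothesis \eqref{eq:log-bm2}, using monotonicity of $\Psi_n^{-1}$ throughout, and finally take the supremum over $p$. No substantive difference from the paper's argument.
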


\begin{proof}
Fix $p\in(0,1)$ and let $t_p=\left(\tfrac{\lambda}{p}\right)^p \left(\tfrac{1-\lambda}{1-p}\right)^{1-p} \in [0,1]$. Then, we have the inclusion 
\begin{equation} \label{rem31}
\lambda K + (1-\lambda) L = p\cdot\left(\tfrac{\lambda}{p} K\right) + (1-p) \cdot\left(\tfrac{1-\lambda}{1-p} L\right) \supseteq \left(\tfrac{\lambda}{p} K\right)^p\left(\tfrac{1-\lambda}{1-p}L\right) ^{1-p} = t_p K^pL^{1-p}.
\end{equation}
Since $t_p\in[0,1]$, Lemma \ref{lem:rev-S} implies that
\begin{equation} \label{rem32}
\gamma_n \big(t_p  K^pL^{1-p} \big) \geq \gamma_n\Big(t_p\Psi_n^{-1}\big(\gamma_n \big( K^pL^{1-p}\big) \big) B_2^n\Big)= \Psi_n\Big(t_p\Psi_n^{-1}\big(\gamma_n \big( K^pL^{1-p}\big) \big)\Big),
\end{equation}
Hence combining \eqref{eq:log-bm2} with \eqref{rem31}, \eqref{rem32} and the monotonicity of $\Psi_n^{-1}$, we derive \eqref{eq:eqeqeq}.
\end{proof}

\smallskip

\noindent {\bf 5.} In \cite[p.~5350]{GZ10}, it was shown that inequality \eqref{eq:quest} is not satisfied for $\xi_n=\Psi_n^{-1}$, where $\Psi_n(r)=\gamma_n(rB_2^n)$. As was communicated to us by Ramon van Handel, this observation formally implies that if $M$ is any Borel set in $\R^n$ and $\Xi_n(r) = \gamma_n(rM)$, then \eqref{eq:quest} is not satisfied for $\xi_n=\Xi_n^{-1}$. Indeed, assuming the contrary and plugging $K=aB_2^n$ and $L=bB_2^n$ in \eqref{eq:quest}, we see that $\Xi_n^{-1}\circ \Psi_n$ is concave. Therefore, for every $r\in(0,\infty)$ and $t\in(0,1)$, we have
\begin{equation} \label{rem51}
t \Xi_n^{-1}\big(\Psi_n(r)\big) \leq \Xi_n^{-1}\big(\Psi_n(t r)\big).
\end{equation}
Moreover, Lemma \ref{lem:rev-S} is equivalent to the fact that for every $p\in(0,1)$ and $t\in(0,1)$,
\begin{equation} \label{rem52}
\Xi_n\big(t\Xi_n^{-1}(p)\big) \geq \Psi_n\big(t\Psi_n^{-1}(p)\big)
\end{equation}
Choosing $p=\Psi_n(r)$ in \eqref{rem52} and combining it with \eqref{rem51}, we deduce that $\Xi_n^{-1}\circ\Psi_n$ is an affine function, which readily implies that $\Xi_n(r)=\Psi_n(ar)$ for some $a\in(0,\infty)$, thus contradicting \cite{GZ10}.

This simple argument reveals that the nontrivial equality cases of the sought-for symmetric Ehrhard inequality \eqref{eq:quest} must be more complicated than the one-parameter family $\{sM\}_{s\geq0}$ of dilates of a given Borel set $M$. The possibility of such extremals is reminiscent of the conjectured multiscale solutions of the symmetric Gaussian isoperimetric problem \cite{Bar01, Hei17}.


\bibliography{Gardner-Zvavitch}
\bibliographystyle{alpha}

\end{document}